\newcommand\newsection[1]{\bigskip\bigskip\refstepcounter{section}
	\noindent{\large\bf\thesection . #1}}
\newcommand\newsubsection[1]{\medskip\refstepcounter{subsection}
	{\noindent\bf #1 \thesubsection .\ }}
\newenvironment{theorem}{\newsubsection{Theorem}\sl}{}
\newenvironment{corollary}{\newsubsection{Corollary}\sl}{}
\newenvironment{lemma}{\newsubsection{Lemma}\sl}{}
\newenvironment{definition}{\newsubsection{Definition}}{}
\newenvironment{proof}{\medskip\noindent{\bf Proof.\ }}{\mbox{$\square$}}
	\newcommand\bl{\begin{lemma}}
		\newcommand\el{\end{lemma}}
	\newcommand\bt{\begin{theorem}}
		\newcommand\et{\end{theorem}}
	\newcommand\be{\begin{equation}}
	\newcommand\ee{\end{equation}}
	\newcommand\bea{\begin{eqnarray}}
	\newcommand\eea{\end{eqnarray}}
	\newcommand\integers{{\mathbb Z}}
\begin{document}

\centerline{\large \bf The girth, odd girth, distance function,}

\medskip

\centerline{\large \bf  and diameter of generalized Johnson graphs}

\bigskip
\bigskip

\centerline{John S. Caughman\footnote[1]{Corresponding author:  caughman@pdx.edu.\hfill AMS 2010 Subject Classification: 05C12}, Ari J. Herman, Taiyo S. Terada}

\medskip

\centerline{\emph{Department of Mathematics \& Statistics}}

\centerline{\emph{Portland State University, Portland, OR, USA}}

\bigskip

\begin{abstract}
For any non-negative integers $v>k>i$, the generalized Johnson graph, $J(v,k,i)$, is the undirected simple graph whose vertices are the $k$-subsets of a $v$-set, and where any two vertices $A$ and $B$ are adjacent whenever $|A\cap B|=i$.  In this article, we derive formulas for the girth, odd girth, distance function, and diameter of $J(v,k,i)$. In particular, let $X = J(v,k,i)$.  Assume $v \geq 2k$ and $(v,k,i) \not= (2k,k,0)$. For convenience, abbreviate $\Delta=v-2k+2i$. We prove the following.
\begin{enumerate}
    \item The girth of $X$ is given by
$$g(X)= \left\{
\begin{array}{ll}
3 & \hbox{if \, } v\geq 3(k-i); \\
4 & \hbox{if \, } v<3(k-i) \, \mbox{ and } \, (v,k,i)\not= (2k+1,k,0); \\
5 & \hbox{if \, } (v,k,i)=(5,2,0); \\
6 & \hbox{if \,} (v,k,i)=(2k+1,k,0) \, \mbox{ and } \, k>2.
\end{array}
\right.$$
\vspace{-1em}
    \item The odd girth of $X$ is given by
 \begin{equation*} 
 og(X)=2\left\lceil\frac{k-i}{\Delta}\right\rceil+1.
 \end{equation*}
\vspace{-1em}
    \item Let $A$ and $B$ be vertices of $X$ and let $x=|A\cap B|$.  Then
\begin{equation*} 
\text{dist}(A,B)= \left\{
\begin{array}{ll}
3 & \hbox{if \, } x<\min\{i,k-\Delta\}; \\
\lceil\frac{k-x}{k-i}\rceil & \hbox{if \, } k-\Delta\leq x< i; \\
\min\{2\lceil \frac{k-x}{\Delta} \rceil,2\lceil \frac{x-i}{\Delta} \rceil +1\} & \hbox{if \, } x \geq i. 
\end{array}
\right.
\end{equation*}
\vspace{-1em}
    \item The diameter of $X$ is given by
$$ \text{diam}(X)= \left\{
\begin{array}{ll}
\lceil \frac{k-i-1}{\Delta} \rceil+1 & \hbox{if \, } v<3(k-i)-1 \text{ or } i=0; \\
3 & \hbox{if \, } 3(k-i)-1\leq v <3k-2i \text{ and } i\neq 0;\\
\lceil\frac{k}{k-i}\rceil & \hbox{if \, } v \geq 3k-2i \text{ and } i\neq 0. \\
\end{array}
\right.$$
\end{enumerate}
\noindent{\small{{\bf Keywords} girth; odd girth; generalized Johnson graph; odd graph; Kneser graph; uniform subset graph}}\\
\end{abstract}

\bigskip 

\centerline{\newsection{Introduction}}

Fix any non-negative integers $v>k>i$. The {\em generalized Johnson graph}, $X=J(v,k,i)$, is the undirected simple graph whose vertices are the $k$-subsets of a $v$-set, and where any two vertices $A$ and $B$ are adjacent whenever \mbox{$|A\cap B|=i$}. Generalized Johnson graphs were introduced by Chen and Lih in \cite{chen2} and have also been studied under the name {\em uniform subset graphs}.  Special cases include the Kneser graphs $J(v,k,0)$, the odd graphs $J(2k+1,k,0)$, and the Johnson graphs $J(v,k,k-1)$.  

In what follows, we derive formulas for the girth, odd girth, distance function, and diameter of $J(v,k,i)$. Some special cases have been previously determined. 
 The classical Johnson graph $J(v,k,k-1)$ is well known to have diameter $\min\{k,v-k\}$, and formulas for the distance and diameter of Kneser graphs were proved in \cite{valencia}.  The diameter of $J(v,k,i)$ was studied in \cite{chen}; however, a formula there gives incorrect values when $i>\frac{2}{3}k$, an important case that includes the classical Johnson graphs. In this paper, we extend and correct such expressions. Regarding odd girth, the Kneser graphs are known to have odd girth $2\lceil \frac{k}{v-2k} \rceil +1$, as proved in \cite{poljak}, which meets the bound given in \cite[p.146]{godsil}, and which simplifies to $2k+1$ in the case of the odd graphs. The Johnson graphs are distance-regular (see \cite{biggs} or \cite{godsil}) but not triangle-free when $v>2$, so they have girth (and odd girth) 3.  

We note that it is possible to extend the definition of $X=J(v,k,i)$ to include cases allowed by the weaker inequalities $v\geq k \geq i$.  However, $X$ is an empty graph when $k = i$ or $v=k$.  If $v = 2k$ and $i = 0$, then $X$ is 
isomorphic to the disjoint union of ${2k}\choose{k}$$/2$ copies of $K_2$. Furthermore, by taking complements, the graphs $J(v,k,i)$ and $J(v,v-k,v-2k+i)$ are easily seen to be isomorphic (see \cite[p.9]{godroy}).  To simplify the exposition, we will often refer to the following global definition.

\begin{definition}\label{global} Fix any nonnegative integers $v>k>i$ and let $X=J(v,k,i)$ denote the corresponding generalized Johnson graph. Assume that $v \geq 2k$ and $(v,k,i) \not= (2k,k,0)$. For convenience, we abbreviate $\Delta=v-2k+2i$. 
\end{definition}

\medskip

Our assumptions imply that $\Delta>0$, and $\Delta>1$ unless $(v,k,i)=(2k+1,k,0)$.

\newpage

\centerline{\newsection{Girth}} 

Recall that the {\em girth} $g(X)$ of a graph $X$ is the length of the shortest cycle in $X$. In this section we derive an expression for the girth of a generalized Johnson graph.  We begin with a lemma that characterizes when two vertices have a common neighbor.

\begin{lemma}\label{dist2-lem} {\bf (Common Neighbor Condition)}
With reference to Definition \ref{global}, let $A$ and $B$ be any vertices and let $x=|A\cap B|$.  Then $A$ and $B$ have a common neighbor if and only if $x\geq \max\{k-\Delta,2i-k\}$.
\end{lemma}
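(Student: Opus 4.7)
My plan is to parameterize any candidate common neighbor $C$ of $A$ and $B$ by how it meets the four cells of the partition of the ambient $v$-set induced by $A$ and $B$. Writing $a=|C\cap(A\cap B)|$, $b=|C\cap(A\setminus B)|$, $c=|C\cap(B\setminus A)|$, and $d=|C\setminus(A\cup B)|$, the adjacency requirements $|C\cap A|=|C\cap B|=i$ together with $|C|=k$ immediately force $b=c=i-a$ and $d=k-2i+a$. Hence a common neighbor exists if and only if there is an integer $a$ compatible with the nonnegativity of $a,b,c,d$ and with the cell-size upper bounds $a\leq x$, $b,c\leq k-x$, and $d\leq v-2k+x$.

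Unpacking these, the constraints on $a$ become
\[
\max\{0,\,i-k+x,\,2i-k\}\;\leq\;a\;\leq\;\min\{x,\,i,\,v-3k+x+2i\},
\]
and since every bound is an integer, a valid $a$ exists precisely when the maximum of the three lower bounds is at most the minimum of the three upper bounds. I would then check the nine pairwise comparisons. Seven of them hold automatically from the standing hypotheses $0\leq i<k$, $x\leq k$, and $v\geq 2k$. The remaining two, $0\leq v-3k+x+2i$ and $2i-k\leq x$, rewrite as $x\geq k-\Delta$ (using $k-\Delta=3k-v-2i$) and $x\geq 2i-k$, and together yield the stated condition $x\geq\max\{k-\Delta,\,2i-k\}$.

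Conversely, given $x\geq\max\{k-\Delta,2i-k\}$, picking any integer $a$ in the nonempty range and then selecting $a$, $i-a$, $i-a$, and $k-2i+a$ elements respectively from $A\cap B$, $A\setminus B$, $B\setminus A$, and the complement of $A\cup B$ produces an explicit common neighbor $C$. The main obstacle is simply carrying out the inequality checks carefully and being attentive to the identity $k-\Delta=3k-v-2i$; no deeper idea is required beyond the inclusion-exclusion bookkeeping.
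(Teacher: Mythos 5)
Your proof is correct and follows essentially the same route as the paper: both parameterize a candidate common neighbor by its intersection sizes with the four cells of the $A$, $B$ partition, reduce existence to a single integer lying between $\max\{0,\,i+x-k,\,2i-k\}$ and $\min\{x,\,i,\,v-3k+2i+x\}$, and observe that only the two comparisons $x\geq k-\Delta$ and $x\geq 2i-k$ are not automatic. Your version just spells out the bookkeeping that the paper delegates to its Venn-diagram figure.
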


\begin{proof}
Note $A$,$B$ have a common neighbor $C$ if and only if there exists $s \in \integers$, such that every region in Figure~\ref{vennfig} has nonnegative size. 

\begin{figure}[h]
{\centerline{\scalebox{.4}{\includegraphics{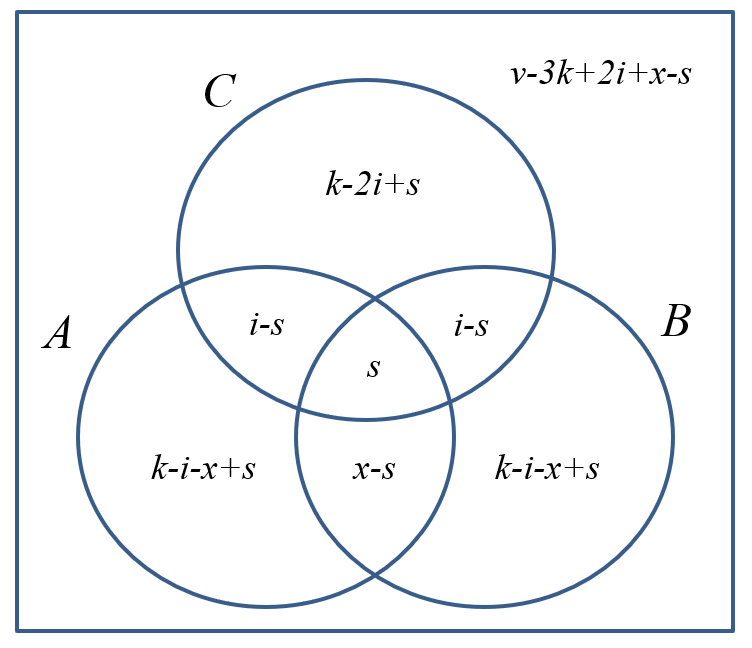}}}}
\caption{Diagram for Lemma~\ref{dist2-lem}}
\label{vennfig}
\end{figure}

\medskip

\noindent
By simplifying the resulting inequalities, we find that $A$ and $B$ have a common neighbor if and only if there exists $s\in\mathbb{Z}$, such that
$$\max\{0,\; i+x-k,\; 2i-k\}\leq s \leq \min \{x,\; i,\; v-3k+2i +x\}.$$
Such an integer $s$ exists if and only if the expression on the left side above does not exceed the expression on the right side.
Under our global assumptions, this is equivalent to $x\geq \max\{k-\Delta,2i-k\}$.
\hfill \end{proof}

\medskip

The lemma above leads immediately to a condition for girth 3.

\begin{lemma} \label{girth3-lem} {\bf (Girth 3)}
With reference to Definition \ref{global}, the girth $g(X)=3$ if and only if $v\geq 3(k-i)$.
\end{lemma}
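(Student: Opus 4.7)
The plan is to observe that a graph has girth $3$ precisely when it contains a triangle, which here means three vertices $A,B,C$ that are pairwise adjacent, or equivalently, two adjacent vertices $A,B$ together with a common neighbor $C$. So the statement reduces to asking when there exist adjacent vertices $A,B$ in $X$ having a common neighbor, and this is exactly the situation that the Common Neighbor Condition (Lemma~\ref{dist2-lem}) addresses.

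First I would note that under the global assumption $v\geq 2k$, adjacent pairs exist in $X$: take $A=\{1,\dots,k\}$ and any $B$ sharing exactly $i$ elements with $A$, which is possible because $2k-i\leq v$. For any such pair, $x=|A\cap B|=i$. Then I would apply Lemma~\ref{dist2-lem} with $x=i$: $A$ and $B$ have a common neighbor if and only if
\[
i\;\geq\;\max\{k-\Delta,\;2i-k\}.
\]

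Next I would analyze the two inequalities separately. The inequality $i\geq 2i-k$ simplifies to $k\geq i$, which always holds since $k>i$ by hypothesis, so this condition is automatic. The inequality $i\geq k-\Delta$ unpacks, using $\Delta=v-2k+2i$, to $v-2k+2i\geq k-i$, i.e., $v\geq 3(k-i)$. Thus the existence of a triangle is equivalent to $v\geq 3(k-i)$.

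Finally I would conclude that $g(X)=3$ if and only if $v\geq 3(k-i)$. There is no real obstacle here; the only small subtlety worth mentioning is that it suffices to test the common-neighbor condition for one adjacent pair, because the hypothesis on the pair depends only on the intersection size $x=i$, and the existence of at least one such pair is guaranteed by $v\geq 2k$. All the work is done by Lemma~\ref{dist2-lem}.
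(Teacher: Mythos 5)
Your proposal is correct and matches the paper's proof essentially verbatim: both reduce girth $3$ to the existence of an adjacent pair with a common neighbor, apply Lemma~\ref{dist2-lem} with $x=i$, discard the automatic inequality $i\geq 2i-k$, and translate $i\geq k-\Delta$ into $v\geq 3(k-i)$. Your added remark that adjacent pairs exist because $v\geq 2k$ is a small, welcome bit of extra care that the paper leaves implicit.
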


\begin{proof}
The graph $X$ contains a $3$-cycle iff there exist adjacent vertices $A$ and $B$ that have a common neighbor.  By Lemma \ref{dist2-lem}, this occurs iff $i\geq \max\{k-\Delta,2i-k\}$.  Since $i\geq 2i-k$ holds in all $J(v,k,i)$ graphs, this condition is equivalent to  $v\geq 3(k-i)$. \hfill 
\end{proof}

\medskip

A sufficient condition for the girth to be at most $4$ is the existence of a $4$-cycle.

\begin{lemma} \label{girth4-lem} {\bf (Girth 4)}
With reference to Definition \ref{global}, if $(v,k,i)\not= (2k+1,k,0)$, then $g(X) \leq 4$.
\end{lemma}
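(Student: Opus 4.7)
The plan is to exhibit a 4-cycle directly, by finding two distinct vertices $A$ and $C$ that admit (at least) two distinct common neighbors. Under the hypothesis, our global assumptions give $\Delta\geq 2$, and this extra slack over Lemma~\ref{dist2-lem} is what I expect to exploit. I would take $A=\{1,2,\dots,k\}$ and $C=\{2,3,\dots,k+1\}$, so that $|A\cap C|=k-1$.

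To enumerate common neighbors of $A$ and $C$, I introduce parameters $a=|X\cap A\cap C|$, $b=|X\cap(A\setminus C)|$, $c=|X\cap(C\setminus A)|$, and $d=|X\cap\overline{A\cup C}|$. The conditions $|X\cap A|=|X\cap C|=i$ and $|X|=k$ force $b=c=i-a$ and $d=k-2i+a$. Since $|A\setminus C|=|C\setminus A|=1$ and $|\overline{A\cup C}|=v-k-1$, a common neighbor for a given admissible $a$ is determined by choosing $a$ elements from $A\cap C$, $i-a$ elements from each singleton region, and $k-2i+a$ elements from the complement, giving $\binom{k-1}{a}\binom{1}{i-a}^{2}\binom{v-k-1}{k-2i+a}$ common neighbors with that value of $a$.

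I would then split on $i$. When $i\geq 1$, both values $a=i$ and $a=i-1$ are admissible (using $\Delta\geq 2i\geq i+1$, which follows from $v\geq 2k$ and $i\geq 1$), and for each the binomial product is positive using only $k-1\geq i$ and $v\geq 2k$. The two choices give neighbors with different values of $|X\cap A\cap C|$, so they are distinct, producing a 4-cycle. When $i=0$, only $a=0$ is admissible, so common neighbors are precisely $k$-subsets of $\overline{A\cup C}$; the hypothesis excluding $(2k+1,k,0)$ forces $v\geq 2k+2$, hence $|\overline{A\cup C}|=v-k-1\geq k+1$, and $\binom{v-k-1}{k}\geq k+1\geq 2$.

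The main bookkeeping obstacle is tracking the admissible range of $a$ against the several competing bounds $0\leq a\leq i$, $i-a\leq 1$, and $a\leq\Delta-1$, and verifying that precisely the excluded triple $(v,k,i)=(2k+1,k,0)$ is the unique instance in which the count of common neighbors of this $A,C$ drops to one; the split $i\geq 1$ versus $i=0$ is what isolates that obstruction cleanly.
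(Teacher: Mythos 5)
Your proof is correct. Both you and the paper establish the bound by exhibiting an explicit $4$-cycle, but the mechanics differ. The paper writes down four concrete sets of the form $A_j\cup B_\ell\cup C$ (with a separate ad hoc construction for $i=1$) and checks the four adjacencies directly. You instead fix a single ``Johnson-adjacent'' pair $A,C$ with $|A\cap C|=k-1$ and count their common neighbors by the same Venn-region parametrization used in the paper's Lemma~\ref{dist2-lem}: a common neighbor is determined by $a=|X\cap A\cap C|$ together with the forced values $b=c=i-a$, $d=k-2i+a$, and you show the admissible range of $a$ supports at least two common neighbors except precisely when $(v,k,i)=(2k+1,k,0)$ or $(2k,k,0)$. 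I verified the feasibility checks: for $i\geq 1$, both $a=i$ (needing $v\geq 2k-i+1$, which follows from $v\geq 2k$ and $i\geq 1$) and $a=i-1$ (needing $v\geq 2k-i$ and $k\geq i+1$) are admissible and yield nonempty, mutually disjoint families of common neighbors; for $i=0$ the count is $\binom{v-k-1}{k}\geq k+1\geq 2$ once $v\geq 2k+2$. Two distinct common neighbors of two distinct vertices do give a genuine $4$-cycle since a common neighbor cannot equal $A$ or $C$ ($i<k$). Your route is arguably more systematic --- it unifies the paper's Cases 1 and 2 into one computation and makes transparent why the odd graphs are the unique obstruction (the common-neighbor count of a Johnson-adjacent pair collapses to one exactly there) --- at the cost of slightly more bookkeeping on the admissible range of $a$. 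One cosmetic point: you reuse the symbol $X$ both for the graph and for the common neighbor; rename the latter to avoid a clash with Definition~\ref{global}.
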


\begin{proof}We proceed in three cases.

\textit{Case 1:} $i \geq 2$ or $v > 2k+1$.
In this case, we have $v \geq 2k - i + 2$.  So we can find disjoint sets, $A_1,A_2,A_3,A_4$, and $B_1,B_2,$ and $C$ such that $|A_1|=|A_2|=|A_3|=|A_4|=1$, and $|B_1|=|B_2|=k-i-1$, and $|C|=i$.  Then, as in Figure~\ref{vennfig2}, the following is a 4-cycle in $X$:
$$A_1\cup B_1\cup C,\;\;\; A_2 \cup B_2\cup C,\;\;\; A_3\cup B_1\cup C,\;\;\; A_4\cup B_2\cup C.$$  

\begin{figure}[h]
{\centerline{\scalebox{.4}{\includegraphics{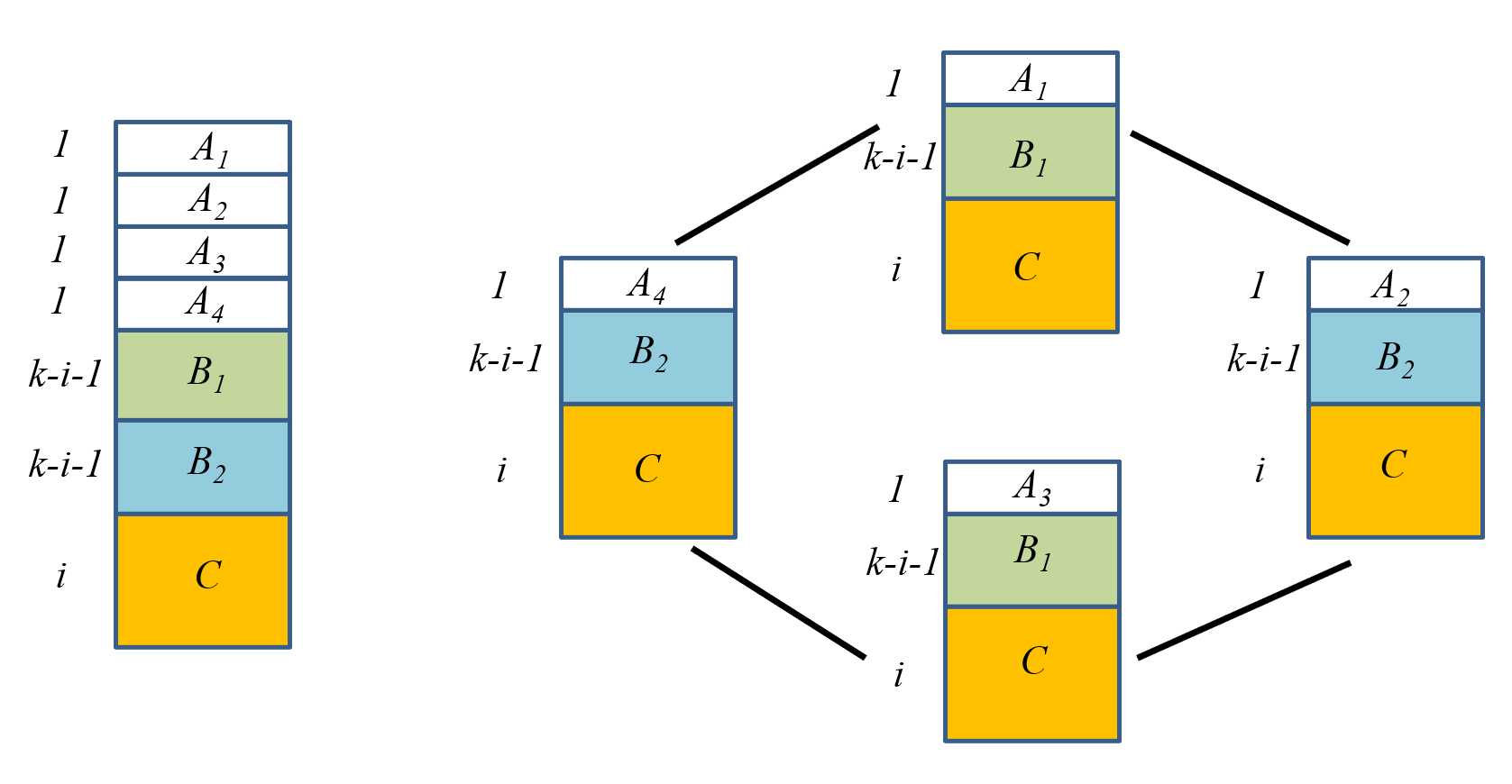}}}}
\caption{Diagram for Case 1 of Lemma~\ref{girth4-lem}}
\label{vennfig2}
\end{figure}

\textit{Case 2:} $i=1$.
In this case, since $v \geq 2k$, we can find disjoint sets $A_1,A_2,A_3,A_4$ and $B_1,B_2$ such that $|A_1|=|A_2|=|A_3|=|A_4|=1$ and $|B_1|=|B_2|=k-2$. Then, as illustrated in Figure~\ref{vennfig3},  the following is a 4-cycle in $X$: $$A_1\cup A_2\cup B_1,\;\;\; A_2\cup A_3\cup B_2,\;\;\;  A_3\cup A_4\cup B_1,\;\;\; A_4\cup A_1\cup B_2.$$  

\begin{figure}[h]
{\centerline{\scalebox{.45}{\includegraphics{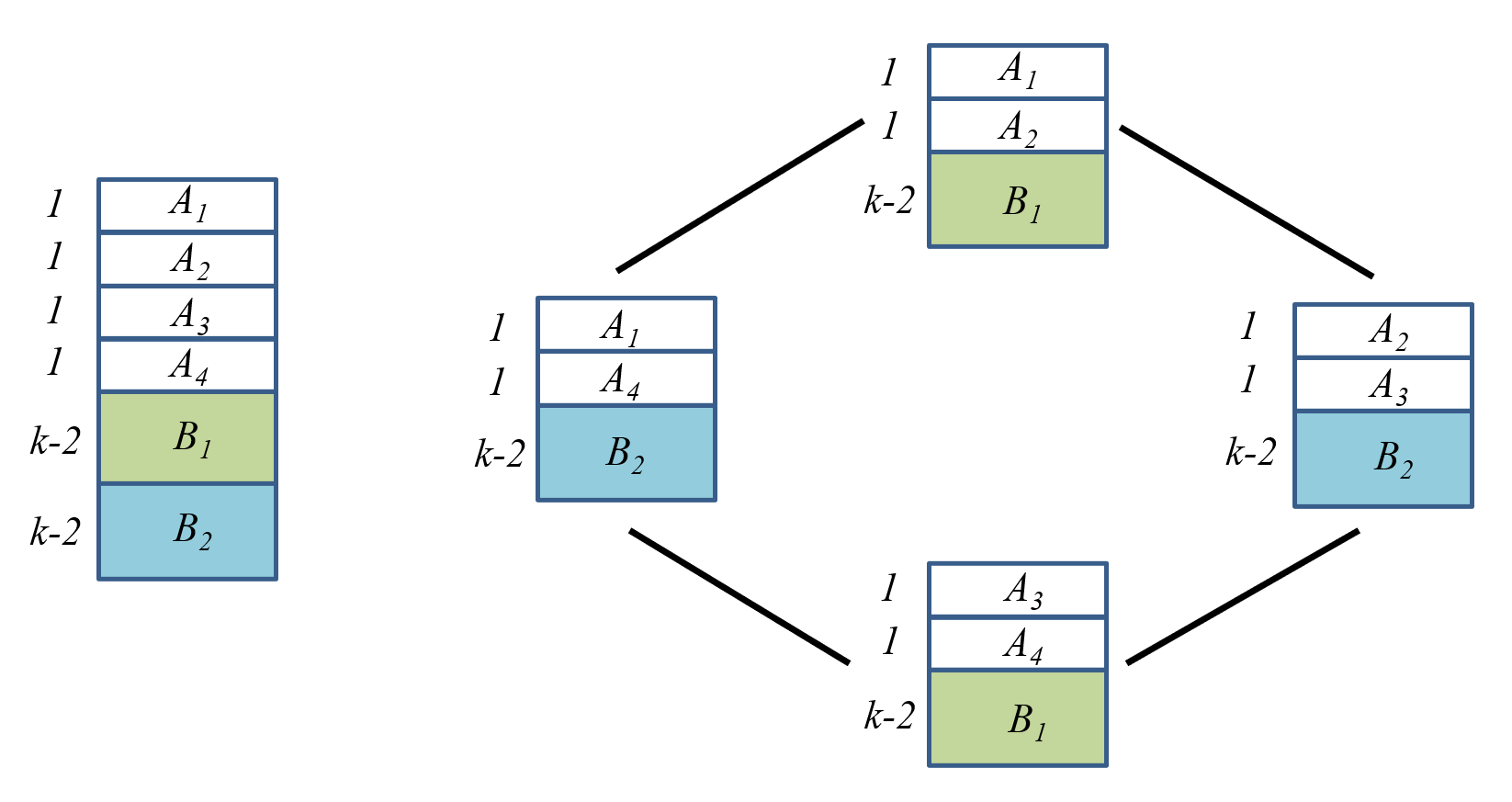}}}}
\caption{Diagram for Case 2 of Lemma~\ref{girth4-lem}}
\label{vennfig3}
\end{figure}

\textit{Case 3:} $i =0$ and $v \leq 2k+1$.
Recall that $v \geq 2k$. So either $v=2k$ or $2k+1$, giving $(v,k,i)=(2k,k,0)$ or $(2k+1,k,0)$. Both are excluded by our assumptions. \hfill 
\end{proof}

\medskip

Combining the above lemmas, we obtain a general expression for the girth.

\begin{theorem}\label{girth-thm}
With reference to Definition \ref{global}, the girth of $X$ is given by
$$g(X)= \left\{
\begin{array}{ll}
3 & \hbox{if \, } v\geq 3(k-i); \\
4 & \hbox{if \, } v<3(k-i) \, \mbox{ and } \, (v,k,i)\not= (2k+1,k,0); \\
5 & \hbox{if \, } (v,k,i)=(5,2,0); \\
6 & \hbox{if \,} (v,k,i)=(2k+1,k,0) \, \mbox{ and } \, k>2.
\end{array}
\right.$$
\end{theorem}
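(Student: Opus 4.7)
The first two cases follow immediately from the preceding lemmas: if $v\geq 3(k-i)$, Lemma~\ref{girth3-lem} yields $g(X)=3$; if $v<3(k-i)$ and $(v,k,i)\neq (2k+1,k,0)$, then Lemma~\ref{girth3-lem} forbids a triangle while Lemma~\ref{girth4-lem} supplies a $4$-cycle, forcing $g(X)=4$.

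The remaining two cases both lie inside the odd graph $O_k=J(2k+1,k,0)$, where $\Delta=1$. Here Lemma~\ref{dist2-lem} reduces to the clean statement that $A$ and $B$ share a common neighbor iff $|A\cap B|\geq k-1$, and when $|A\cap B|=k-1$ the set $[2k+1]\setminus(A\cup B)$ has size exactly $k$, so the common neighbor is \emph{unique}. This uniqueness rules out $4$-cycles in $O_k$, since any $4$-cycle would produce a distance-$2$ pair with two distinct common neighbors. For $(v,k,i)=(5,2,0)$, the explicit cycle $\{1,2\},\{3,4\},\{1,5\},\{2,3\},\{4,5\}$ then yields $g(X)=5$.

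For $(v,k,i)=(2k+1,k,0)$ with $k>2$, I obtain $g(X)\leq 6$ by exhibiting a $6$-cycle: partition $[2k+1]$ into disjoint pieces $S,T,\{z_1,z_2,z_3\}$ with $|S|=|T|=k-1$, and verify that $S\cup\{z_1\},T\cup\{z_2\},S\cup\{z_3\},T\cup\{z_1\},S\cup\{z_2\},T\cup\{z_3\}$ is a cycle in $O_k$. The matching lower bound requires ruling out $5$-cycles. If $A_1A_2A_3A_4A_5$ were such a cycle, the absence of triangles together with Lemma~\ref{dist2-lem} would force $|A_1\cap A_3|=|A_3\cap A_5|=k-1$. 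Writing $A_1\cap A_3=S$ with $A_1=S\cup\{b\}$ and $A_3=S\cup\{a\}$, a short case analysis on how $A_5$ meets $A_3$ (either $A_5\cap A_3=S$ or $A_5\cap A_3$ contains $a$) shows that $A_1\cap A_5$ must contain all but at most one element of $S$, giving $|A_1\cap A_5|\geq k-2\geq 1$ and contradicting $A_5\sim A_1$. The main obstacle is precisely this $5$-cycle exclusion; once the uniqueness of distance-$2$ common neighbors is in hand, the required case analysis is brief and elementary.
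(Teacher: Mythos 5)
Your proposal is correct, and for the first two cases it coincides exactly with the paper's proof (Lemma~\ref{girth3-lem} plus Lemma~\ref{girth4-lem}). Where you diverge is in the odd-graph cases $(v,k,i)=(2k+1,k,0)$: the paper simply cites the well-known girth of the odd graphs from the literature, whereas you derive it from Lemma~\ref{dist2-lem}. Your key observation --- that in $J(2k+1,k,0)$ two distinct vertices have a common neighbor iff they meet in $k-1$ points, in which case the common neighbor is forced to be the $k$-element complement of their union and is therefore unique --- cleanly kills all $4$-cycles, and your two-case analysis of how $A_5$ can meet $A_3=S\cup\{a\}$ (either $A_5\cap A_3=S$, giving $S\subseteq A_1\cap A_5$, or $A_5\cap A_3=(S\setminus\{s\})\cup\{a\}$, giving $|A_1\cap A_5|\geq k-2\geq 1$ when $k>2$) correctly rules out $5$-cycles; the explicit $5$- and $6$-cycles you exhibit are valid, so the lower and upper bounds match. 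The trade-off is just length versus self-containment: the paper's citation keeps the theorem's proof to two lines, while your argument makes the result independent of external references and, pleasingly, reuses the same Common Neighbor Condition that drives the rest of the section. Both are sound; if you write yours up, spell out the $5$-cycle case analysis in full rather than describing it as ``short,'' since that is the only step a reader cannot check at a glance.
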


\begin{proof}
The first two cases follow from Lemmas \ref{girth3-lem} and \ref{girth4-lem}.  The remaining cases are odd graphs, for which the girth is well-known. (See, for example, \cite[p.58]{biggs}.) \hfill
\end{proof}

\newpage

\centerline{\newsection{Distance}} 

In this section, we consider questions of distance. With reference to Definition \ref{global}, we derive a general expression for the distance between two vertices in terms of their intersection.  

We begin with the distance between vertices whose intersection is less than $i$.

\begin{lemma}\label{dist3-lem}
With reference to Definition \ref{global}, let $A$ and $B$ be vertices and let $x=|A\cap B|$.  Suppose $x<i$.  Then
$$ {\rm dist}(A,B)  = \left\{
\begin{array}{cl}
3 & \; \hbox{if \, } x < k- \Delta; \\
{\displaystyle \left\lceil \frac{k-x}{k-i} \right\rceil}
& \; \hbox{if \, } x \geq k- \Delta.
\end{array}
\right.$$
\end{lemma}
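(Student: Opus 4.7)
My plan is to separate the argument into a lower-bound direction and an upper-bound direction, handling the two regimes $x < k-\Delta$ and $x \ge k-\Delta$ by different constructions.

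For the lower bound, observe first that $x < i$ forces $|A \cap B| \ne i$, so $A$ and $B$ are non-adjacent and $\text{dist}(A,B) \ge 2$. When $x < k-\Delta$, Lemma~\ref{dist2-lem} rules out any common neighbor of $A$ and $B$, so $\text{dist}(A,B) \ge 3$. When $x \ge k-\Delta$, I will derive $\text{dist}(A,B) \ge \lceil(k-x)/(k-i)\rceil$ via a symmetric-difference argument: along any walk $A = A_0, A_1, \dots, A_d = B$, adjacency gives $|A_{j-1} \triangle A_j| = 2(k-i)$, and iterating the subadditivity of the symmetric difference yields $2(k-x) = |A \triangle B| \le 2d(k-i)$.

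For the upper bound when $k-\Delta \le x < i$, I will construct a walk of length $d = \lceil(k-x)/(k-i)\rceil$ by first prescribing a nondecreasing integer sequence $x = f_0 \le f_1 \le \cdots \le f_{d-1} = i$, $f_d = k$ whose successive differences lie in $[0, k-i]$ (feasible because $(d-1)(k-i) \ge i - x$), and then realizing each intended value $f_j = |A_j \cap B|$ step by step. The transition $A_{j-1} \to A_j$ is built by removing $b_1$ elements from $A_{j-1} \cap B$ and $k-i-b_1$ from $A_{j-1} \setminus B$, and adding $a_1$ elements from $B \setminus A_{j-1}$ and $k-i-a_1$ from $V \setminus (A_{j-1} \cup B)$, subject to $a_1 - b_1 = f_j - f_{j-1}$. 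For maximal-gain steps one takes $b_1 = 0$, $a_1 = k - i$; for the (at most one) sub-maximal adjust step, setting $b_1 = \max\{0, k-\Delta\}$ reduces the outside-capacity inequality $k-i-a_1 \le v - 2k + f_{j-1}$ precisely to $f_{j-1} \ge k-\Delta$, which holds because $f_{j-1} \ge x \ge k-\Delta$.

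For the upper bound when $x < k-\Delta$, I will exhibit a length-$3$ walk; note that this regime forces $i < k/2$, since $x \ge 0$ and $x < 3k-2i-v$ combined with $v \ge 2k$ give $2i < k$. Choose $F \subseteq A \setminus B$ of size $i - x$ and $G \subseteq B \setminus A$ of size $k - i$, and set $A_1 := (A \cap B) \cup F \cup G$, yielding $|A \cap A_1| = i$ and $|A_1 \cap B| = x + k - i$. Then choose $R \subseteq A_1 \cap B$ of size $k - 2i + x$ and $H \subseteq V \setminus (A_1 \cup B)$ of size $k - i$; the latter selection is feasible because $|V \setminus (A_1 \cup B)| = v - k - i + x \ge k - i$ follows from $v \ge 2k$. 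Setting $A_2 := ((A_1 \cap B) \setminus R) \cup H$, a direct count gives $|A_1 \cap A_2| = |A_2 \cap B| = i$, so $A, A_1, A_2, B$ is a walk of length $3$. I expect the main obstacle to be the Case~2 bookkeeping at the adjust step, where the outside-capacity inequality reduces exactly to $x \ge k-\Delta$, mirroring the case split in the statement.
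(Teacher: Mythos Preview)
Your proof is correct, and the overall strategy matches the paper's (same lower bounds via Lemma~\ref{dist2-lem} and the symmetric-difference argument), but your upper-bound constructions take a somewhat different route. In the case $x<k-\Delta$, the paper builds only the first intermediate vertex $C$ (your $A_1$) and then invokes Lemma~\ref{dist2-lem} to get $\mathrm{dist}(C,B)\le 2$; you instead construct both $A_1$ and $A_2$ explicitly. In the case $x\ge k-\Delta$, the paper splits further into $x\ge 2i-k$ (distance $2$ directly by Lemma~\ref{dist2-lem}) and $x<2i-k$ (swap blocks of size $k-i$ until the intersection with $B$ lands in the range $[2i-k,i)$, then apply Lemma~\ref{dist2-lem} once more); your unified construction prescribes the full intersection profile $f_0,\dots,f_{d-1}=i$ and realizes every step explicitly, so you never need the common-neighbor lemma for the upper bound. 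Your version is a bit more self-contained; the paper's is shorter because it reuses Lemma~\ref{dist2-lem}.

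One small bookkeeping point: in your adjust step, the constraint that actually forces $f_{j-1}\ge k-\Delta$ is the availability bound $b_1\le|A_{j-1}\cap B|=f_{j-1}$ once you set $b_1=k-\Delta$, not the outside-capacity inequality itself; and the outside-capacity inequality with this $b_1$ reduces to $f_j\ge i$, which holds precisely when the adjust step is placed last (at $j=d-1$). With that placement all constraints check out, so the argument goes through.
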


\begin{proof}We proceed in three cases.

\textit{Case 1:} $x < k- \Delta$.
By Lemma \ref{dist2-lem} and $x<i$, dist$(A,B)\geq 3$.   As in Figure~\ref{vennfig4}, let $A'\subseteq A\setminus B$, such that $|A'|=i-x$.  Let $B' \subseteq B\setminus A$, such that $|B'|=k-i$.  Let $C = A' \cup (A\cap B) \cup B'$.  Then $|C|=k$, and $|A\cap C|= i$, so $C$ is a vertex adjacent to $A$.  Note that $|B\cap C|=k-i+x\geq k-\Delta$.  Also, since $x < k-\Delta$, we have $2i-k<-(v-2k)-x\leq 0$, so $|B \cap C|\geq 2i-k$.  By Lemma \ref{dist2-lem}, dist$(B,C)\leq 2$ and dist$(A,B)=3$ as desired.

\begin{figure}[h]
{\centerline{\scalebox{.45}{\includegraphics{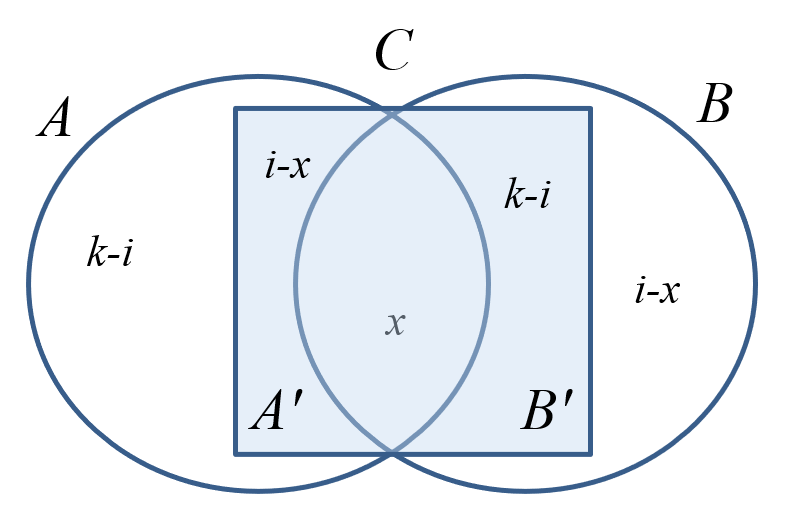}}}}
\caption{Diagram for Case 1 of Lemma~\ref{dist3-lem}}
\label{vennfig4}
\end{figure}

\textit{Case 2:} $x \geq k- \Delta$ and $x \geq 2i-k$.
Since $x<i$, Lemma \ref{dist2-lem} implies dist$(A,B) = 2$.  Also note that $k-i<k-x\leq 2(k-i)$, so $\lceil \frac{k-x}{k-i} \rceil = 2$, as desired.

\textit{Case 3:} $x \geq k- \Delta$ and $x < 2i-k$.
In this case, $k-x>2(k-i)$.  So there exist positive integers $q,m$ such that $k-x = (q+1)(k-i)+m$ with $0<m\leq k-i$.  Let $C = A\cap B$.  Then we can write $A$ and $B$ as disjoint unions
$$A=A_1\cup\cdots\cup A_{q+2}\cup C \hspace{1em} \mbox{and} \hspace{1em} B=B_1\cup\cdots\cup B_{q+2}\cup C,$$
where $|A_j|=|B_j|=k-i$ for $j\in\{1,\ldots,q+1\}$ and $|A_{q+2}|=|B_{q+2}|=m$.  Define
$$X_j = (B_1\cup\cdots\cup B_j)\cup (A_{j+1}\cup\cdots\cup A_{q+2})\cup C$$
 for each $j\in \{1,\ldots,q\}$.  As in Figure~\ref{vennfig5}, $A,X_1,\ldots,X_q$ is a path of length $q$.  Note that $|X_q\cap B|=x + q(k-i) = i-m$. So now, by our case assumptions, 
 $$i > |X_q\cap B| = i-m \geq 2i-k > x \geq k- \Delta.$$ By Lemma~\ref{dist2-lem}, we see that dist$(X_q,B)=2$. So dist$(A,B)\leq q+2 = \lceil \frac{k-x}{k-i} \rceil$.  On the other hand, since adjacent vertices differ only by $k-i$ elements, dist$(A,B) \geq \lceil \frac{k-x}{k-i} \rceil$. \hfill
\end{proof}

\begin{figure}[h]
{\centerline{\scalebox{.45}{\includegraphics{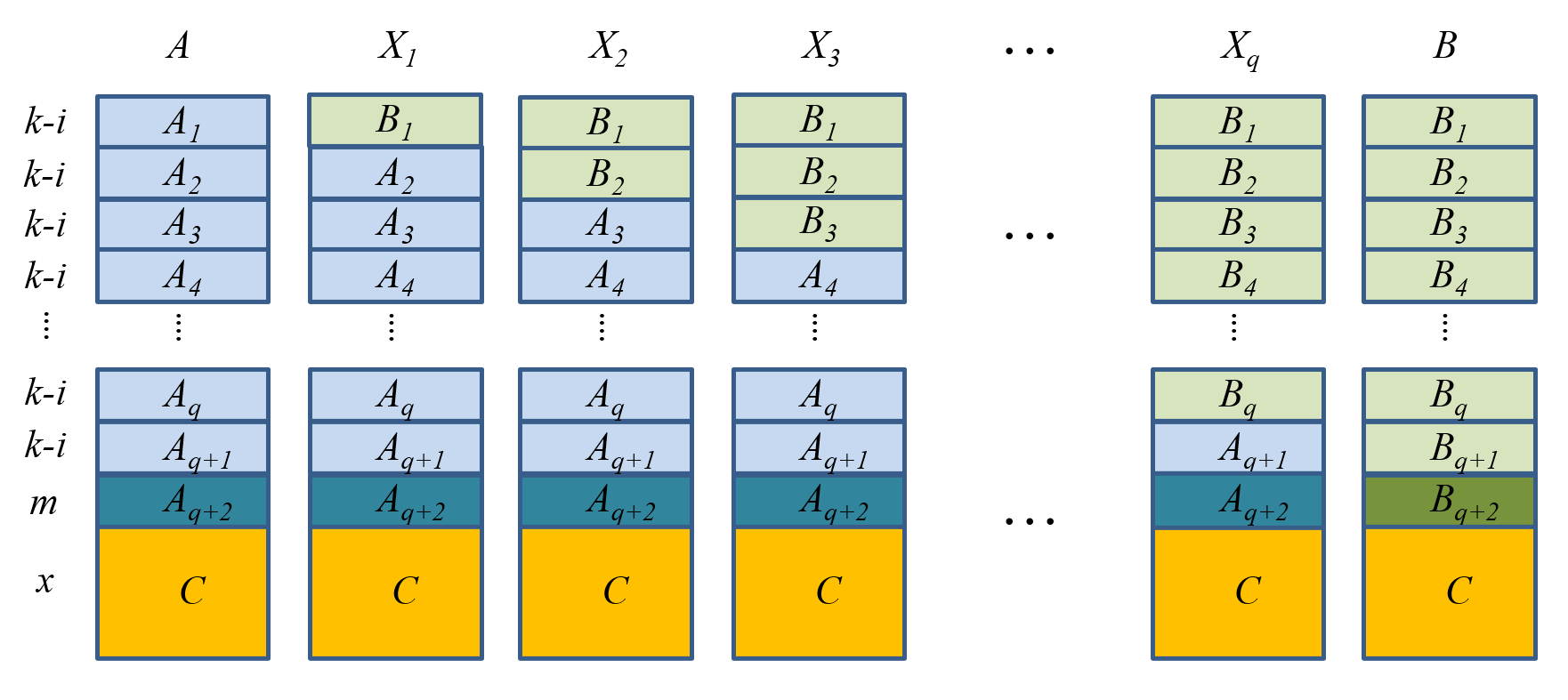}}}}
\caption{Diagram for Case 3 of Lemma~\ref{dist3-lem}}
\label{vennfig5}
\end{figure}

\medskip

It remains to consider the case when $|A \cap B|$ is greater than $i$.  Before we do so, the following result adapts Lemmas 1,2 in \cite{chen3} to generalized Johnson graphs.

\begin{lemma}\label{dist-x-big-lem}
With reference to Definition \ref{global}, let $A$ and $B$ be vertices and let $x=|A\cap B|$.  Suppose there is an $AB$-path of length $d$.
\begin{enumerate}
\item If $d=2p$, then $p\geq \left\lceil \frac{k-x}{\Delta} \right\rceil.$
\item If $d=2p+1$, then $p\geq \left\lceil \frac{x-i}{\Delta} \right\rceil.$
\end{enumerate}
\end{lemma}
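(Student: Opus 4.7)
The plan is to isolate a two-step inclusion--exclusion estimate and then telescope it along the path. Specifically, the crux is: whenever $U \sim V \sim W$ are three consecutive vertices on a walk in $X$, so that $|U \cap V| = |V \cap W| = i$, the identity
\[ |U \cup V \cup W| = 3k - 2i - |U \cap W| + |U \cap V \cap W| \]
combined with $|U \cup V \cup W| \leq v$ and $|U \cap V \cap W| \geq 0$ rearranges to
\[ |U \cap W| \geq 3k - 2i - v = k - \Delta. \]
Hence $|U \setminus W|, |W \setminus U| \leq \Delta$, and for any fixed $k$-set $A$,
\[ \bigl|\,|A \cap U| - |A \cap W|\,\bigr| \leq \max\{|U \setminus W|, |W \setminus U|\} \leq \Delta, \]
since, for example, $|A \cap U| - |A \cap W| \leq |A \cap (U \setminus W)| \leq |U \setminus W|$.

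With this two-step estimate in hand, I would write the hypothesized $AB$-path as $A = A_0, A_1, \ldots, A_d = B$ and apply the estimate to same-parity triples $A_{j-1}, A_j, A_{j+1}$. For part (i), with $d = 2p$, telescope the intersections of $A$ with even-indexed vertices:
\[ k - x = |A \cap A_0| - |A \cap A_{2p}| = \sum_{j=0}^{p-1} \bigl( |A \cap A_{2j}| - |A \cap A_{2j+2}| \bigr) \leq p\Delta, \]
so $p \geq (k-x)/\Delta$, and since $p \in \mathbb{Z}$, $p \geq \lceil (k-x)/\Delta \rceil$.

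For part (ii), with $d = 2p+1$, start from the base value $|A \cap A_1| = i$ (which holds because $A_1$ is adjacent to $A = A_0$) and telescope along odd-indexed vertices to $A_{2p+1} = B$:
\[ x - i = |A \cap A_{2p+1}| - |A \cap A_1| = \sum_{j=0}^{p-1} \bigl( |A \cap A_{2j+3}| - |A \cap A_{2j+1}| \bigr) \leq p\Delta, \]
giving $p \geq \lceil (x-i)/\Delta \rceil$.

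The main obstacle is psychological rather than technical: the naive single-edge bound $\bigl||A \cap A_j| - |A \cap A_{j+1}|\bigr| \leq k-i$ is too weak to produce the $\Delta$ denominator, so one must look two edges ahead. Once the inclusion--exclusion on three consecutive path vertices is in place, the $\Delta$ bound emerges directly from the slack between $3k - 2i$ and $v$, and the two telescoping sums split naturally according to the parity of $d$.
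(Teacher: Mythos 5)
Your proof is correct and is essentially the paper's argument: your two-step inclusion--exclusion estimate (that three consecutive vertices $U\sim V\sim W$ force $|U\cap W|\ge k-\Delta$, hence $|U\setminus W|\le\Delta$) is exactly the content of the Common Neighbor Condition (Lemma~\ref{dist2-lem}), and your telescoping sums are the unrolled form of the paper's induction on $d$, which peels off two path edges at a time using that same bound. The only cosmetic differences are that you re-derive the distance-two bound directly rather than citing the lemma, and that in the odd case you track $|A\cap A_{2j+1}|$ starting from the base value $|A\cap A_1|=i$ instead of invoking the even case for the first $2p$ edges and then handling the final edge.
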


\begin{proof}
We argue by induction on $d$. If $d = 0$, then $A=B$ so, $x=k$ and $p=0\geq\lceil \frac{k-x}{\Delta} \rceil$.  If $d=1$, then $x = i$, so $p=0\geq \lceil\frac{x-i}{\Delta}\rceil$.  If $d=2$, then by Lemma \ref{dist2-lem}, $x\geq k-\Delta$.  Hence, $p=1\geq \lceil \frac{k-x}{\Delta}\rceil$.  
Now assume $d\geq 3$ and that the claim holds for all vertices joined by paths of length less than $d$. We proceed in two cases.

\textit{Case 1:} $d = 2p$.
Along the $AB$-path, there exists a vertex $C$ with an $AC$-path of length $2(p-1)$ and a $BC$-path of length 2.  By the inductive hypothesis, $k-|A\cap C|\leq(p-1)\Delta$ and $k-|C\cap B|\leq \Delta$.  Therefore, $k-x=|A\setminus B|\leq |A\setminus C|+|C\setminus B| \leq p\Delta$.  Hence $p \geq \lceil \frac{k-x}{\Delta} \rceil$.

\textit{Case 2:} $d = 2p+1$.
Along the $AB$-path, there exists a vertex $C$ that is adjacent to $B$ and has an $AC$-path of length $2p$. By the inductive hypothesis, $|A\setminus C|\leq p\Delta$.  Therefore, $x-i=|A\cap B|-i\leq |A\setminus C| + |B\cap C| - i\leq p\Delta$.  Hence $p \geq \lceil \frac{x-i}{\Delta} \rceil$. \hfill
\end{proof}

\medskip

The previous lemma implies a lower bound on the distance.  The next result will show that this bound is sharp.

\begin{lemma}\label{dist-path}
With reference to Definition \ref{global}, let $A$ and $B$ be vertices and let $x=|A\cap B|$.  Suppose $x>i$.  Then 
\begin{equation}\label{disty} {\textstyle
{\rm dist}(A,B)= \min\left\{2\left\lceil \frac{k-x}{\Delta} \right\rceil,2\left\lceil \frac{x-i}{\Delta} \right\rceil +1\right\}.}
\end{equation}
\end{lemma}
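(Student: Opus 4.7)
The plan is to prove the asserted formula by matching upper and lower bounds. The lower bound is essentially Lemma \ref{dist-x-big-lem}: every $AB$-path of even length $2p$ satisfies $p\geq\lceil(k-x)/\Delta\rceil$, and every $AB$-path of odd length $2p+1$ satisfies $p\geq\lceil(x-i)/\Delta\rceil$, so $\text{dist}(A,B)\geq\min\{2\lceil(k-x)/\Delta\rceil,\,2\lceil(x-i)/\Delta\rceil+1\}$. The substance of the proof lies in constructing $AB$-paths that attain these bounds.

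The key tool I would introduce is a \emph{swap sublemma}: for any vertex $U$ and any $R\subseteq U$, $S\subseteq V\setminus U$ with $|R|=|S|=m$ and $1\leq m\leq\min\{\Delta,k-i\}$, the set $U':=(U\setminus R)\cup S$ is a vertex of $X$ at distance at most $2$ from $U$. To prove it, I would build the intermediate $C$ by choosing an integer $t$ and setting $|C\cap R|=|C\cap S|=t$, $|C\cap(U\cap U')|=i-t$, and $|C\cap(V\setminus(U\cup U'))|=k-i-t$; then $|C\cap U|=|C\cap U'|=i$ and $|C|=k$ automatically. Nonnegativity of the four block sizes together with the region cardinalities forces
\[
t\in\bigl[\max\{0,\,m-(k-i),\,m+i-\Delta\},\ \min\{m,\,i,\,k-i\}\bigr],
\]
and a short case check, using $v\geq 2k$ (which yields $i\leq\Delta$), shows this interval is nonempty whenever $m\leq\min\{\Delta,k-i\}$.

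Equipped with the sublemma, I would construct the even $AB$-path of length $2p$, where $p:=\lceil(k-x)/\Delta\rceil$, by partitioning $A\setminus B$ into matched blocks $R_1,\ldots,R_p$ and $B\setminus A$ into $S_1,\ldots,S_p$, each of size $\Delta$ except possibly the last (of smaller size), and setting $A_j:=(A\setminus(R_1\cup\cdots\cup R_j))\cup(S_1\cup\cdots\cup S_j)$ so that $A_0=A$ and $A_p=B$. Since $|R_j|\leq\Delta$ and $|R_j|\leq k-x\leq k-i-1<k-i$, each transition $A_{j-1}\to A_j$ is an admissible swap, contributing at most $2$ to the path length. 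For the odd $AB$-path of length $2p+1$ with $p:=\lceil(x-i)/\Delta\rceil$, I would pick $C_1$ adjacent to $A$ by letting $C_1\cap A$ be any $i$-subset of $A\cap B$, including all of $B\setminus A$ in $C_1\setminus A$, and filling the remaining $x-i$ positions of $C_1\setminus A$ from $V\setminus(A\cup B)$ (feasible since $|V\setminus(A\cup B)|=v-2k+x\geq x-i$, equivalent to $v\geq 2k-i$). Then $|C_1\cap B|=k-(x-i)$, so applying the even-path construction from $C_1$ to $B$ yields a path of length $2p$; prepending the edge $A\sim C_1$ gives the required odd path.

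The main obstacle is the swap sublemma: one must verify that the nine pairwise comparisons keeping the interval for $t$ nonempty all hold, and this is where the global hypothesis $v\geq 2k$ (via $i\leq\Delta$) does essential work. Once the sublemma is secured, the even- and odd-path constructions are routine assemblies, and taking the minimum of their lengths yields the claimed distance.
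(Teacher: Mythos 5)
Your proposal is correct and follows essentially the same strategy as the paper: the lower bound comes from Lemma \ref{dist-x-big-lem}, the even upper bound from a path that trades $\Delta$ elements of $A\setminus B$ for $\Delta$ elements of $B\setminus A$ every two steps, and the odd upper bound from prepending a single edge to a neighbor $C_1$ of $A$ with $|C_1\cap B|=k-x+i$. Your swap sublemma is sound, but it is an immediate consequence of Lemma \ref{dist2-lem} applied to $U$ and $U'$ (since $|U\cap U'|=k-m\geq\max\{k-\Delta,\,2i-k\}$ whenever $m\leq\min\{\Delta,k-i\}$), so the nine-way interval check can be replaced by a one-line citation.
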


\begin{proof}
When $x = k$ the result is trivial, so assume $x<k$.  Let $C=A\cap B$ and $D = \overline{A\cup B}$; it follows that $|C|=x$ and $|D|=v-2k+x$.  There exist non-negative integers $q,m$ such that $k-x = q \Delta + m$, with $0< m \leq \Delta$.  We can write A and B as disjoint unions $A=C\cup\{a_1,\ldots,a_{k-x}\}$ and $B=C\cup\{b_1,\ldots,b_{k-x}\}$.  

If $q=0$, then $k-x=m\leq \Delta$, which implies $x \geq -v+3k-2i$.  
The right side of (\ref{disty}) equals 2. Since $x>i$, we also have $x>2i-k$.  By Lemma~\ref{dist2-lem}, dist$(A,B)=2$ as desired.  

Now, assume $q\geq 1$.
For each $j\in\{1,\ldots,q\}$, let
$$A_j=\{a_1,\ldots,a_{(j-1)\Delta +i}\} \hspace{1em} \mbox{and} \hspace{1em} A_j'=\{a_{j\Delta+1},\ldots,a_{k-x}\},$$
$$B_j=\{b_1,\ldots,b_{j\Delta}\} \hspace{1em} \mbox{and} \hspace{1em} B_j'=\{b_{j\Delta -i+1},\ldots,b_{k-x}\},$$
and define
$$X_{2j-1} = D\cup A_j \cup B_j' \hspace{1em} \mbox{and} \hspace{1em} X_{2j} = C\cup B_j \cup A_j'.$$

\begin{figure}[h]
{\centerline{\scalebox{.4}{\includegraphics{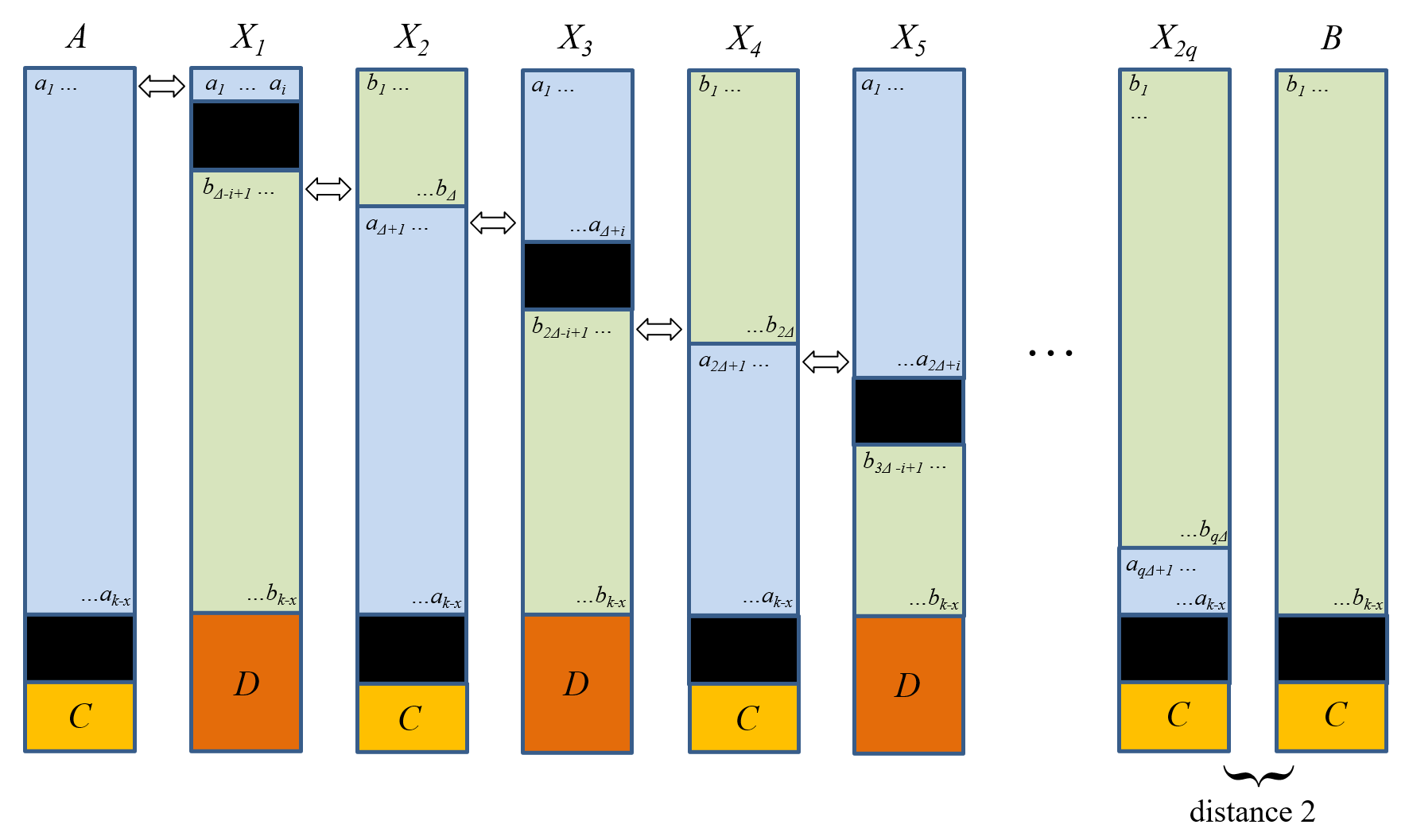}}}}
\caption{Diagram for Lemma~\ref{dist-path}}
\label{vennfig6}
\end{figure}

Then, as in Figure~\ref{vennfig6}, we see that $A,X_1,\ldots,X_{2q}$ is a path of length $2q$.  Note that $|X_{2q} \cap B| = k-m \geq k-\Delta=-v+3k-2i$.  Also, since $m \leq k-x$, we have $|X_{2q}\cap B| \geq x>i\geq 2i-k$.  So dist$(X_{2q},B)=2$  by Lemma \ref{dist2-lem}, and dist$(A,B) \leq 2q+2= 2\lceil \frac{k-x}{\Delta}  \rceil$.

Now, choose any $D' \subseteq D$, $C'\subseteq C$ satisfying $|D'| = |C'|= x-i$.  Let $A' = (B\setminus C')\cup D'$.  Then $A'$ is a vertex adjacent to $A$.  Further, $|A'\cap B|= k-x+i>i$.  By applying the previous argument to $A'$ and $B$, we have dist$(A',B) \leq 2\lceil \frac{k-(k-x+i)}{\Delta} \rceil=2\lceil \frac{x-i}{\Delta} \rceil$.  Therefore dist$(A,B) \leq 2\lceil \frac{x-i}{\Delta} \rceil+1$. 

By Lemma \ref{dist-x-big-lem}, it follows that dist$(A,B)= \min\{2\lceil \frac{k-x}{\Delta} \rceil,2\lceil \frac{x-i}{\Delta} \rceil +1\}$. \hfill 
\end{proof}

\medskip 

From the above results, we obtain a general formula for the distance between two vertices.

\begin{theorem}\label{dist-thm}
With reference to Definition \ref{global}, let $A$ and $B$ be vertices and let $x=|A\cap B|$.  Then
\begin{equation}\label{dist-eq}
\text{dist}(A,B)= \left\{
\begin{array}{ll}
3 & \hbox{if \, } x<\min\{i,k-\Delta\}; \\
\lceil\frac{k-x}{k-i}\rceil & \hbox{if \, } k-\Delta\leq x< i; \\
\min\{2\lceil \frac{k-x}{\Delta} \rceil,2\lceil \frac{x-i}{\Delta} \rceil +1\} & \hbox{if \, } x \geq i. \\
\end{array}
\right.
\end{equation}
\end{theorem}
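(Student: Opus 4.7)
The plan is to assemble Theorem~\ref{dist-thm} by a straightforward case split on $x=|A\cap B|$ and to invoke the preceding lemmas as black boxes. The three branches in the theorem correspond to (a) $x<i$ with $x<k-\Delta$, (b) $x<i$ with $x\geq k-\Delta$, and (c) $x\geq i$; I would verify that these partition all values of $x\in\{0,\ldots,k\}$ consistent with the theorem's ``$\min$'' formulation.

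First I would handle the regime $x<i$, which is exactly the content of Lemma~\ref{dist3-lem}. Its Case~1 yields $\text{dist}(A,B)=3$ when $x<k-\Delta$, and its Cases~2--3 yield $\text{dist}(A,B)=\lceil (k-x)/(k-i)\rceil$ when $x\geq k-\Delta$. To reconcile this with the theorem, I would note that when $k-\Delta>i$ the inequality $x<i$ already forces $x<k-\Delta$, so $\min\{i,k-\Delta\}=i$ and the second line is vacuous; when $k-\Delta\leq i$ the two subranges $\{x<k-\Delta\}$ and $\{k-\Delta\leq x<i\}$ genuinely partition $\{x:x<i\}$ and $\min\{i,k-\Delta\}=k-\Delta$, so both of the first two lines appear. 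Either way the output matches.

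Next I would dispose of $x\geq i$. The sub-case $x=i$ means $A$ and $B$ are adjacent, so the distance equals $1$; I would check that the third line of the formula returns this value, since $\min\{2\lceil (k-i)/\Delta\rceil,\,2\lceil 0/\Delta\rceil+1\}=\min\{2\lceil (k-i)/\Delta\rceil,\,1\}=1$, using $k>i$ and $\Delta\geq 1$ to see that the first term is at least~$2$. The sub-case $x>i$ is precisely Lemma~\ref{dist-path}.

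The only subtlety, and the main bookkeeping step, is the alignment at the boundary between Lemma~\ref{dist3-lem} and the theorem's presentation: one needs to verify that writing the threshold as $\min\{i,k-\Delta\}$ correctly accommodates both signs of $k-\Delta-i$. Aside from that, everything is a mechanical combination of the earlier lemmas and no new graph-theoretic construction is needed.
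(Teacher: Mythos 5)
Your proposal is correct and follows essentially the same route as the paper: the theorem is assembled directly from Lemma~\ref{dist3-lem} (for $x<i$) and Lemma~\ref{dist-path} (for $x>i$), with the adjacency case $x=i$ checked against the third line of the formula. Your extra bookkeeping about how $\min\{i,k-\Delta\}$ accommodates both signs of $k-\Delta-i$ is a sound clarification that the paper leaves implicit.
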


\begin{proof}
Apply Lemmas \ref{dist3-lem} and \ref{dist-path}.  Note that when $x=i$, we have dist$(A,B)=1=\min\{2\lceil \frac{k-x}{\Delta} \rceil,2\lceil \frac{x-i}{\Delta} \rceil +1\}$. \hfill 
\end{proof}

\newpage

\centerline{\newsection{Diameter}}

In this section, we find the diameter of generalized Johnson graphs.
We begin with a well-known observation about the classical Johnson graphs $J(v,k,k-1)$ that is an easy corollary of Theorem~\ref{dist-thm}.

\begin{corollary}\label{johnson}
With reference to Definition \ref{global}, assume $k=i+1$. Then
$$ \mbox{diam}(X) = k.$$
\end{corollary}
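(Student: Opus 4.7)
The plan is to substitute $i=k-1$ directly into Theorem~\ref{dist-thm} and maximize the resulting distance function over the intersection parameter $x=|A\cap B|\in\{0,1,\ldots,k\}$. Substituting $i=k-1$ yields $\Delta=v-2$ and $k-i=1$, and the global assumption $v\geq 2k$ gives
$$k-\Delta \;=\; k-v+2 \;\leq\; 2-k,$$
which is nonpositive for $k\geq 2$. So the first branch of the theorem (requiring $x<k-\Delta$) is vacuous in that range and can be ignored.

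With the first branch gone, I would read off the distance as follows. For $0\leq x\leq k-2$, the middle branch applies and collapses to $\lceil(k-x)/1\rceil=k-x$; for $x=k-1$ we have adjacent vertices (distance $1$); and for $x=k$ we have $A=B$. Hence the distances realized over all pairs are exactly $\{0,1,\ldots,k\}$, and the maximum value $k$ is attained by any two disjoint $k$-subsets, which exist since $v\geq 2k$.

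The degenerate case $k=1$ needs a separate line, since then the middle branch's constraint $k-\Delta\leq x<i=0$ is also empty for nonnegative $x$; here $J(v,1,0)$ is the complete graph on the singletons and has diameter $1=k$ by inspection. I do not anticipate any real obstacle: Theorem~\ref{dist-thm} has absorbed all the combinatorial work, so the corollary amounts to observing that the middle branch of the distance formula becomes $k-x$ when $k-i=1$, which is maximized on $\{0,\ldots,k-2\}$ at $x=0$.
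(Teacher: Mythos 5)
Your proposal is correct and follows essentially the same route as the paper's own proof: substitute $k=i+1$ into Theorem~\ref{dist-thm}, rule out the first branch using $v\geq 2k$, read off $k-x$ from the middle branch, and realize the maximum with disjoint vertices. The only cosmetic difference is that you treat $k=1$ as a separate degenerate case, whereas the paper absorbs it by bounding $\min\{i,k-\Delta\}\leq 0$ directly.
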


\begin{proof} Fix any vertices $A,B$ and let $x=|A \cap B|$.
Since $v \geq 2k$, we have $k-\Delta \leq 1-i$. So $\min \{i, k-\Delta \} \leq 0$ and the first case in (\ref{dist-eq}) is impossible. If $x \geq i$, then $x=k$ or $k-1$, so $\mbox{dist}(A,B)=0$ or 1. And if $k-\Delta \leq x < i$ then the second case in (\ref{dist-eq}) says $\mbox{dist}(A,B)= k-x$ which ranges between 2 and $k$ as $x$ ranges between 0 and $i-1$. \hfill
\end{proof}

\medskip

The next result determines the maximum value of the expression in Lemma \ref{dist-path} when we exclude the classical Johnson graphs.

\begin{lemma}\label{max-lem}
With reference to Definition \ref{global},
assume $k>i+1$. Let 
$${\textstyle 
f(x) = \min\left\{2\left\lceil \frac{k-x}{\Delta} \right\rceil,2\left\lceil \frac{x-i}{\Delta} \right\rceil +1\right\},
}$$  
and let $\mathcal{I}=\{i+1,\ldots,k\}$. Then 
$$
\max_{x\in\mathcal{I}} \, f(x) =
{\textstyle \left\lceil \frac{k-i-1}{\Delta}\right\rceil+1.}
$$   
\end{lemma}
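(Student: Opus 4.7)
The plan is to set $M := \lceil (k-i-1)/\Delta \rceil$ and decompose $f$ as the pointwise minimum of $g_1(x) := 2\lceil (k-x)/\Delta \rceil$ and $g_2(x) := 2\lceil (x-i)/\Delta \rceil + 1$. Since $k \geq i+2$, we have $M \geq 1$. The decisive structural feature is that $g_1$ is an even-valued, non-increasing function of $x$ while $g_2$ is an odd-valued, non-decreasing function of $x$; in particular $g_1(x) \neq g_2(x)$ for every $x$, and $\max_{\mathcal{I}} f$ is controlled by what happens near their crossover.

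For the upper bound $f(x) \leq M+1$, I would argue by contradiction. Assuming $f(x) \geq M+2$, the evenness of $g_1$ and the oddness of $g_2$ upgrade the inequalities $g_1(x) \geq M+2$ and $g_2(x) \geq M+2$ to
\[
\left\lceil \tfrac{k-x}{\Delta} \right\rceil \geq \left\lceil \tfrac{M}{2}\right\rceil + 1 \qquad \text{and} \qquad \left\lceil \tfrac{x-i}{\Delta} \right\rceil \geq \left\lfloor \tfrac{M}{2}\right\rfloor + 1,
\]
which unpack to $k - x \geq \lceil M/2 \rceil \Delta + 1$ and $x - i \geq \lfloor M/2 \rfloor \Delta + 1$. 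Adding these and using $\lceil M/2 \rceil + \lfloor M/2 \rfloor = M$ gives $k - i - 1 \geq M\Delta + 1$, forcing $\lceil (k-i-1)/\Delta \rceil \geq M+1$, a contradiction. This half of the argument is uniform in the parity of $M$, which is cleaner than splitting early.

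For the matching lower bound, I would exhibit a specific $x^\ast \in \mathcal{I}$ with $f(x^\ast) = M+1$ by casework on the parity of $M$. If $M = 2s$ (so $s \geq 1$), choose $x^\ast := i + (s-1)\Delta + 1$; a direct computation gives $g_2(x^\ast) = 2s+1 = M+1$, while the inequality $k-i-1 \geq (2s-1)\Delta + 1$ (which follows from $M = 2s$) yields $k - x^\ast \geq s\Delta + 1$ and so $g_1(x^\ast) \geq M+2$. If $M = 2s+1$, choose $x^\ast := i + s\Delta + 1$; here $g_2(x^\ast) = M+2$, and the pair of bounds $2s\Delta < k-i-1 \leq (2s+1)\Delta$ pins $\lceil (k-x^\ast)/\Delta \rceil = s+1$ exactly, so $g_1(x^\ast) = M+1$. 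In both cases the membership $i+1 \leq x^\ast \leq k-1$ is immediate. The main obstacle is keeping the parity-dependent bookkeeping with ceilings of $(M+1)/2$ and $(M+2)/2$ straight, but the identity $\lceil M/2 \rceil + \lfloor M/2 \rfloor = M$ collapses the two parity subcases in the upper bound into a single line.
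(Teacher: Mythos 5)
Your proof is correct and follows essentially the same strategy as the paper's: an upper bound obtained by bounding the two branches of the minimum via ceiling manipulations (the paper splits on a threshold $x_0$ where you instead add the two inequalities to reach a contradiction), and a matching lower bound via an explicit witness $x^\ast$ chosen by the parity of $\lceil\frac{k-i-1}{\Delta}\rceil$, which in the odd case coincides with the paper's $x_1$. The bookkeeping all checks out, including the membership $x^\ast\in\mathcal{I}$ and the edge case $s=0$.
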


\begin{proof}
Let $x\in\mathcal{I}$. There exist integers $\epsilon \in \{0,1\}$ and $q,m \geq 0$ such that $k-i-1 = (2q+\epsilon)\Delta+m$ and $0<m\leq \Delta$.  We prove $\max_{x\in\mathcal{I}}f(x)=2q+\epsilon+2$.

Let $x_0=(q+\epsilon)\Delta + i$.  If $x > x_0$, then $2\lceil \frac{k-x}{\Delta} \rceil\leq 2\lceil \frac{k-(x_0+1)}{\Delta} \rceil = 2(q+1) \leq 2q+\epsilon + 2$.  If $x \leq x_0$, then $2\lceil \frac{x-i}{\Delta} \rceil +1 \leq 2\lceil \frac{x_0-i}{\Delta} \rceil +1 = 2(q+\epsilon)+1\leq 2q + \epsilon + 2$.  
Hence, $f(x)\leq 2q+\epsilon+2$.

Let $x_1 = q\Delta + i + 1 + \epsilon(m-1)\in\mathcal{I}$.  It follows that $\lceil \frac{k-x_1}{\Delta}\rceil= q+\epsilon+1$ and $\lceil\frac{x_1-i}{\Delta}\rceil=q+1$.   Therefore, $f(x_1)=\min\{2(q+\epsilon+1),2q+3\}=2q+\epsilon+2$, and the result follows. 
\hfill \end{proof}

\medskip

We can apply the previous lemma to obtain the following corollary, which gives the well-known diameter (see \cite{valencia}) of the Kneser graphs $J(v,k,0)$.

\begin{corollary}\label{kneser} \cite[Theorem 1]{valencia}
With reference to Definition \ref{global}, assume $i=0$. Then
$$ \mbox{diam}(X) =
{\textstyle \left\lceil \frac{k-1}{v-2k}\right\rceil+1.}
$$   
\end{corollary}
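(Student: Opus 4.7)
The plan is to read off the diameter directly from Theorem~\ref{dist-thm} combined with Lemma~\ref{max-lem}. With $i=0$ one has $\Delta=v-2k$, and for any two vertices $A,B$ the intersection $x=|A\cap B|$ automatically satisfies $x\geq 0=i$, so the first two cases of (\ref{dist-eq}) are vacuous. Thus for every pair of vertices,
$$
\text{dist}(A,B) \;=\; \min\left\{2\left\lceil \frac{k-x}{\Delta}\right\rceil,\; 2\left\lceil \frac{x}{\Delta}\right\rceil+1\right\}.
$$

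To obtain the diameter I would maximize this expression as $x$ ranges over the attainable values $\{0,1,\ldots,k\}$. First I would separate off the boundary value $x=0=i$, where the second argument of the minimum collapses to $1$, so the distance is just $1$ (as expected, since disjoint $k$-sets are adjacent in the Kneser graph). The remaining values lie in the set $\mathcal{I}=\{i+1,\ldots,k\}$ appearing in Lemma~\ref{max-lem}. Assuming $k\geq 2$ (so that $k>i+1$), that lemma applies verbatim to the function $f$ defined there and yields
$$
\max_{x\in\mathcal{I}} f(x) \;=\; \left\lceil \frac{k-i-1}{\Delta}\right\rceil + 1 \;=\; \left\lceil \frac{k-1}{v-2k}\right\rceil + 1,
$$
which is at least $2$ and therefore dominates the $x=0$ contribution, giving the claimed diameter.

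The one remaining situation is the degenerate case $k=1$, which is not covered by Lemma~\ref{max-lem}. I would handle it in a single line: here $X=J(v,1,0)$ is the complete graph $K_v$ on the singletons, so $\text{diam}(X)=1$, and the formula evaluates to $\lceil 0/(v-2)\rceil + 1 = 1$, in agreement. There is no real obstacle in this proof — everything is bookkeeping and direct quotation of Theorem~\ref{dist-thm} and Lemma~\ref{max-lem}. The only point that requires a moment of care is confirming that the range $\{i+1,\ldots,k\}$ of Lemma~\ref{max-lem} accounts for every intersection size realized in a Kneser graph once the trivial $x=0$ boundary value is peeled off.
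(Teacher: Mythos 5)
Your proof is correct and follows essentially the same route as the paper: both reduce to the third case of Theorem~\ref{dist-thm}, dispose of $k=1$ by noting $X$ is complete, and invoke Lemma~\ref{max-lem} for $k\geq 2$. You are merely more explicit than the paper about why the $x=0$ value and the domain restriction to $\mathcal{I}=\{i+1,\ldots,k\}$ cause no loss.
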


\begin{proof} Fix any vertices $A,B$. Let $x=|A \cap B|$ and apply case $x \geq i$ in (\ref{dist-eq}). Since $i=0$, note that $\Delta = v-2k$. If $k=1$, then $X$ is complete, so $\mbox{diam}(X)=1$, as desired.  If $k >1$, Lemma \ref{max-lem} says the maximum value of the distance function is $\left\lceil \frac{k-1}{v-2k}\right\rceil+1,$ as desired. \hfill
\end{proof}

\medskip

We now present the general expression for the diameter. This theorem extends and corrects previous claims found elsewhere (for example, \cite{chen}).

\begin{theorem}
With reference to Definition \ref{global}, we have
$$ \text{diam}(X)= \left\{
\begin{array}{ll}
\lceil \frac{k-i-1}{\Delta} \rceil+1 & \hbox{if \, } v<3(k-i)-1 \text{ or } i=0; \\
3 & \hbox{if \, } 3(k-i)-1\leq v <3k-2i \text{ and } i\neq 0;\\
\lceil\frac{k}{k-i}\rceil & \hbox{if \, } v \geq 3k-2i \text{ and } i\neq 0. \\
\end{array}
\right.$$
\end{theorem}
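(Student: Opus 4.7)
I plan to compute $\mbox{diam}(X)$ by maximizing $\mbox{dist}(A,B)$ over all vertex pairs. By Theorem~\ref{dist-thm}, this distance depends only on $x=|A\cap B|$, so the task reduces to maximizing the piecewise function \eqref{dist-eq} over $x\in\{0,1,\ldots,k-1\}$. Two special cases follow from earlier results. When $i=0$, Corollary~\ref{kneser} gives $\lceil(k-1)/\Delta\rceil+1$, which is the first branch (as $k-i-1=k-1$). When $k=i+1$ and $i\geq 1$, the global bound $v\geq 2k=2i+2$ yields $v\geq k+i+1\geq k+2=3k-2i$, placing us in the third branch, where $\lceil k/(k-i)\rceil=k$ agrees with Corollary~\ref{johnson}.

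For the main argument I assume $i\geq 1$ and $k\geq i+2$, and bound each case of \eqref{dist-eq} separately. Case~1 contributes exactly $3$ when it is non-vacuous, which happens iff $\min\{i,k-\Delta\}\geq 1$; since $i\geq 1$, this reduces to $\Delta\leq k-1$ (equivalently $v<3k-2i$). The function $\lceil(k-x)/(k-i)\rceil$ in case~2 is decreasing in $x$, so its max is attained at $x=\max\{0,k-\Delta\}$. Case~3's maximum is $\lceil(k-i-1)/\Delta\rceil+1$ by Lemma~\ref{max-lem}. I then split into three subcases on $\Delta$, matching the three branches of the theorem.

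In subcase~(a), $\Delta\geq k$ (so $v\geq 3k-2i$): case~1 is vacuous, case~2 peaks at $x=0$ with value $\lceil k/(k-i)\rceil\geq 2$, and $k-i-1<k\leq\Delta$ forces case~3 to be at most $2$. So the max is $\lceil k/(k-i)\rceil$. In subcase~(b), $k-i-1\leq\Delta\leq k-1$ (so $3(k-i)-1\leq v<3k-2i$): case~1 attains $3$, case~3 is at most $2$, and case~2 contributes at most $\lceil\Delta/(k-i)\rceil$, which I claim is at most~$3$; hence the max is $3$. In subcase~(c), $\Delta\leq k-i-2$ (so $v<3(k-i)-1$): $k-\Delta\geq i+2>i$ makes case~2 vacuous; case~3 contributes $\lceil(k-i-1)/\Delta\rceil+1\geq 3$ since $\Delta<k-i-1$, dominating case~1's value of $3$.

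The main obstacle is the bound on case~2 in subcase~(b): showing $\lceil\Delta/(k-i)\rceil\leq 3$, i.e., $\Delta\leq 3(k-i)$. I will derive this by combining the subcase constraint $\Delta\leq k-1$ with the global estimate $\Delta\geq 2i$ (from $v\geq 2k$), which forces $2i\leq k-1$ and hence $3i\leq 3(k-1)/2\leq 2k+1$, giving $\Delta\leq k-1\leq 3(k-i)$ as required. The three subcases then assemble into the piecewise formula in the theorem.
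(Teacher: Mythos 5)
Your proposal is correct and follows essentially the same route as the paper: dispose of $i=0$ and $k=i+1$ via Corollaries~\ref{kneser} and~\ref{johnson}, then maximize the piecewise distance formula of Theorem~\ref{dist-thm} over $x$, using Lemma~\ref{max-lem} for the $x>i$ range and splitting into the same three regimes (your $\Delta$-conditions are exactly the paper's $v$-conditions). The only difference is minor bookkeeping in the middle regime, where you bound the second branch by $\lceil\Delta/(k-i)\rceil\leq 3$ while the paper observes $\lceil k/(k-i)\rceil\leq 2$; both suffice.
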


\begin{proof} By Corollaries \ref{johnson} and \ref{kneser}, the result holds when $i=0$ or $i=k-1$, so assume $0<i<k-1$. We proceed in three cases.

\textit{Case 1}: $v<3(k-i)-1$. 
In this case, $\lceil \frac{k-i-1}{\Delta} \rceil+1 \geq 3$.  Also, $3(k-i) > v \geq 2k$, so 
$\lceil \frac{k-i-1}{\Delta} \rceil+1 \geq 3 > \lceil\frac{k}{k-i}\rceil$.  
By Lemma \ref{max-lem} and Theorem \ref{dist-thm}, there exist vertices $A,B$ such that dist$(A,B)=\lceil \frac{k-i-1}{\Delta} \rceil+1$.  It follows that diam$(X)=\lceil \frac{k-i-1}{\Delta} \rceil+1$, as desired.

\textit{Case 2}: $3(k-i)-1\leq v < 3k-2i$.
 In this case, $\lceil \frac{k-i-1}{\Delta} \rceil+1 \leq 2$. Also, $3k-2i > v \geq 2k$, so  
 $\lceil\frac{k}{k-i}\rceil \leq 2$.  By Theorem \ref{dist-thm}, if $A,B$ are any disjoint vertices, dist$(A,B)=3$, since $\min \{i, k-\Delta\} >0$. Hence, diam$(X)=3$, as desired.

\textit{Case 3}: $v\geq 3k-2i$.
In this case, $\lceil \frac{k-i-1}{\Delta} \rceil+1 \leq 2$.  Also $k \leq \Delta$, so the first case in Theorem \ref{dist-thm} does not occur.  Since $i> 0$, we have $\lceil\frac{k}{k-i}\rceil \geq 2$.  By Theorem \ref{dist-thm}, if $A,B$ are any disjoint vertices, dist$(A,B)=\lceil\frac{k}{k-i}\rceil$.  Hence diam$(X)=\lceil\frac{k}{k-i}\rceil$, as desired. \hfill
\end{proof}

\newpage

\centerline{\newsection{Odd Girth}}

With reference to Definition \ref{global}, Theorem~\ref{girth-thm} says that the girth of $X$ satisfies $3 \leq g(X) \leq 6$.  When $g(X) = 3$ or $5$, the odd girth equals the girth, so $og(X) = 3$ or $5$, respectively. We consider the cases of even girth separately below. We will see that, in all cases, the odd girth is given by the following expression:
\begin{equation}\label{OddG}
    og(X) = 2\left\lceil\frac{k-i}{\Delta}\right\rceil +1.
\end{equation} 
 
\noindent
{\bf Girth 6 case.}

\medskip


With reference to Definition \ref{global}, assume $g(X)=6$. Theorem~\ref{girth-thm} tells us that $X$ is an odd graph, with $(v,k,i)=(2k+1,k,0)$ for some integer $k>2$. In this case, the odd girth equals $2k+1$, a result originally due to \cite{poljak}. Observe that, since $i=0$ and $\Delta = 1$, this agrees with the expression in (\ref{OddG}). Because this value also follows readily from our previous results, we include a proof here.

\begin{lemma}\cite[Cor. 11]{poljak}\label{poljakLem}
	With reference to Definition \ref{global}, assume $k>2$. Let $X=J(2k+1,k,0)$. Then $$og(X)= 2k+1.$$
\end{lemma}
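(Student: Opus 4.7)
The plan is to establish the two inequalities $og(X)\geq 2k+1$ and $og(X)\leq 2k+1$ separately. Under the hypotheses of the lemma, $v=2k+1$, $i=0$, and $\Delta=1$, so the target value $2\lceil (k-i)/\Delta\rceil+1$ from (\ref{OddG}) reduces to $2k+1$.

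For the lower bound, I would apply Lemma~\ref{dist-x-big-lem} directly. Suppose $X$ contains an odd cycle of length $\ell$ and fix any edge $\{A,B\}$ on this cycle, so that $|A\cap B|=i=0$. Removing this edge leaves an $AB$-path of length $\ell-1$, which is even since $\ell$ is odd. Writing $\ell-1=2p$, part~(i) of Lemma~\ref{dist-x-big-lem} yields
$$p \;\geq\; \left\lceil \frac{k-|A\cap B|}{\Delta} \right\rceil \;=\; \lceil k/1 \rceil \;=\; k,$$
and hence $\ell\geq 2k+1$.

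For the upper bound, I would exhibit an explicit $(2k+1)$-cycle. Identifying the underlying $(2k+1)$-set with $\Z/(2k+1)\Z$, define
$$C_j = \{jk+1,\, jk+2,\, \ldots,\, jk+k\} \pmod{2k+1}, \qquad j=0,1,\ldots,2k.$$
Each $C_j$ is a block of $k$ cyclically consecutive residues, and $C_{j+1}$ (with subscript reduced mod $2k+1$) is the immediately following block of $k$ residues, so $C_j\cap C_{j+1}=\emptyset$ and the two are adjacent in $X$. Since $\gcd(k,2k+1)=1$, the map $j\mapsto jk+1\pmod{2k+1}$ is a bijection on $\Z/(2k+1)\Z$, so the starting residues, and therefore the vertices $C_j$ themselves, are pairwise distinct. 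Hence $C_0,C_1,\ldots,C_{2k},C_0$ is a cycle of length $2k+1$ in $X$.

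The main obstacle is essentially absent: Lemma~\ref{dist-x-big-lem} already packages the key inductive count, so the lower bound reduces to a one-line application. The only care required is in verifying that the cyclic construction produces $2k+1$ distinct vertices with consecutive ones disjoint, which follows immediately from the coprimality of $k$ and $2k+1$ noted above.
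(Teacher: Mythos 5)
Your proof is correct. The lower bound is essentially the paper's argument in different packaging: the paper picks a cycle $A_1,\ldots,A_{2t+1}$, notes $|A_1\cap A_3|=k-1$, and applies part (ii) of Lemma~\ref{dist-x-big-lem} to the length-$(2t-1)$ path from $A_3$ to $A_1$; you instead delete one edge and apply part (i) to the resulting even path, which is a touch more direct and avoids invoking Theorem~\ref{dist-thm}. The upper bound is where you genuinely diverge. The paper constructs three vertices $A,B,C$ (explicit interval-type sets, split by parity of $k$) with $|A\cap B|=|B\cap C|=\lfloor k/2\rfloor$ and $|A\cap C|=0$, reads off ${\rm dist}(A,B)={\rm dist}(B,C)=k$ and ${\rm dist}(A,C)=1$ from Theorem~\ref{dist-thm}, and concludes via a closed odd walk of length $2k+1$ (which must contain an odd cycle of at most that length). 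You instead exhibit an honest $(2k+1)$-cycle by cyclically shifting a block of $k$ consecutive residues by $k$ each step; the coprimality $\gcd(k,2k+1)=1$ gives distinctness of the vertices, and consecutive blocks are disjoint since $2k<2k+1$. Your construction is self-contained (it does not lean on the distance formula at all) and produces an actual shortest odd cycle rather than just a closed walk; the paper's version recycles machinery already established and generalizes more readily to the other cases of the odd-girth theorem. One tiny point worth making explicit in your writeup: distinct starting residues imply distinct sets $C_j$ because an arc of $k<2k+1$ consecutive residues determines its starting point uniquely (it is the unique element $a$ of the arc with $a-1$ outside it).
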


\begin{proof}
	Suppose $A_1, A_2, ..., A_{2t+1}$ is any odd cycle in $X$. Then dist$(A_1,A_3)=2$, so $|A_1 \cap A_3|=k-1$ by Theorem~\ref{dist-thm}. The cycle also has a path of length $2t-1$ from $A_3$ to $A_1$, so $t \geq k$  by Lemma~\ref{dist-x-big-lem}. It remains to show that a closed walk of length $2k+1$ exists.
 
If $k=2d$ then $v=4d+1$. Let
$$ A= \{1,...,2d\}, \quad B= \{d+1,...,3d\}, \quad C = \{ 2d+1,...,4d\}.$$

If $k=2d+1$ then $v=4d+3$. Let
$$ A= \{1,...,2d+1\}, \quad B= \{d+2,...,3d+2\}, \quad C = \{ 2d+3,...,4d+3\}.$$
In both cases, $|A \cap B| = |B \cap C| = d$ and $|A \cap C|=0$. So, by Theorem~\ref{dist-thm}, dist$(A,B)=$ dist$(B,C)=k$, and dist$(A,C)=1$, as desired. \hfill 
\end{proof}

\bigskip

\noindent
{\bf Girth 4 case.}

\medskip 

With reference to Definition \ref{global}, assume $g(X)=4$. It is helpful to recall that $\Delta >1$ unless $(v,k,i)=(2k+1,k,0).$  But $g(X)=4$, so Theorem~\ref{girth-thm} implies that 
\begin{equation}\label{DeltaBd}
1 < \Delta < k-i.
\end{equation}
To establish (\ref{OddG}), it will help to have conditions to guarantee that a pair of vertices are at distance $\lceil\frac{k-i}{\Delta}\rceil.$

\begin{lemma} \label{g4-lem}
	With reference to Definition \ref{global}, assume $g(X)=4$.  Let $r=\lceil\frac{k-i}{\Delta}\rceil$. Fix any vertices $A$,$B$ and let $x=|A \cap B|$. Then the following hold.

\begin{enumerate} 
\item If $r$ is odd and $x\in \{ \lfloor\frac{k+i-\Delta}{2}\rfloor , \lceil\frac{k+i-\Delta}{2}\rceil \}$, then dist$(A,B)=r$. 
\item If $r$ is even and $x\in \{ \lfloor\frac{k+i}{2}\rfloor , \lceil\frac{k+i}{2}\rceil \}$, then dist$(A,B)=r$.
\end{enumerate}
\end{lemma}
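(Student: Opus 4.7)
The plan is to apply Theorem~\ref{dist-thm} directly in the range $x \geq i$. The first step will be to verify that the stipulated values of $x$ satisfy $x \geq i$, so that
$$\text{dist}(A,B) \;=\; f(x) \;:=\; \min\!\left\{2\left\lceil\tfrac{k-x}{\Delta}\right\rceil,\; 2\left\lceil\tfrac{x-i}{\Delta}\right\rceil + 1\right\}.$$
The girth-4 hypothesis (\ref{DeltaBd}) gives $\Delta \leq k-i-1$, so in case (i) we have $k+i-\Delta \geq 2i+1$, and hence $\lfloor(k+i-\Delta)/2\rfloor \geq i$; case (ii) is immediate from $k \geq i$. The bound $x \leq k$ is trivial, and vertices $A,B$ with $|A\cap B|=x$ exist because $v \geq 2k$.

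To evaluate $f(x)$, I will write $k-i = r\Delta - s$ with $0 \leq s < \Delta$. When $r = 2q+1$ is odd (necessarily with $q \geq 1$, since the girth-4 assumption forces $r \geq 2$), the two choices of $x$ yield
$$k - x \in \{(q+1)\Delta - \lfloor s/2\rfloor,\; (q+1)\Delta - \lceil s/2\rceil\},$$
$$x - i \in \{q\Delta - \lceil s/2\rceil,\; q\Delta - \lfloor s/2\rfloor\}.$$
Since $0 \leq \lfloor s/2\rfloor \leq \lceil s/2\rceil \leq \Delta/2 < \Delta$, the first ceiling is $q+1$ and the second is $q$, giving $f(x) = \min\{2q+2,\, 2q+1\} = 2q+1 = r$. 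When $r = 2q$ is even (again with $q \geq 1$), the analogous computation places both $k-x$ and $x-i$ in the interval $((q-1)\Delta,\, q\Delta]$, so both ceilings equal $q$ and $f(x) = \min\{2q,\, 2q+1\} = 2q = r$.

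The main obstacle is nothing conceptual but the careful floor/ceiling bookkeeping — in particular, handling the parity of $s$ and the edge case $s=0$ (where the two candidate values of $x$ coincide into a single integer). I would verify the identities $\lfloor(k+i-\Delta)/2\rfloor = i + q\Delta - \lceil s/2\rceil$ and $\lceil(k+i-\Delta)/2\rceil = i + q\Delta - \lfloor s/2\rfloor$ (with the analogues $\lfloor(k+i)/2\rfloor = i + q\Delta - \lceil s/2\rceil$ and $\lceil(k+i)/2\rceil = i + q\Delta - \lfloor s/2\rfloor$ for case (ii)) explicitly before substituting them into the expressions for $k-x$ and $x-i$. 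Everything else is straightforward arithmetic.
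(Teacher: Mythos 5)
Your proof is correct and takes essentially the same approach as the paper: both arguments verify $x \geq i$ and then evaluate the two ceiling expressions in the $x \geq i$ case of Theorem~\ref{dist-thm}. The only difference is bookkeeping --- you compute both ceilings exactly via the substitution $k-i = r\Delta - s$, while the paper pins down one ceiling by sandwiching $x-i$ (resp.\ $k-x$) between integer multiples of $\Delta$ and bounds the other from below by superadditivity of the ceiling.
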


\vspace{-.2cm}

\begin{proof} 
(i).  Assume $r$ is odd and $x\in \{ \lfloor\frac{k+i-\Delta}{2}\rfloor , \lceil\frac{k+i-\Delta}{2}\rceil \}$. Write $r = 2d+1$ so that 
\begin{equation}\label{2dineq}
2d <\frac{k-i}{\Delta}\leq 2d+1.
\end{equation} 

Claim 1: $\left\lceil \frac{x-i}{\Delta}\right\rceil =d$.
%
%
By (\ref{DeltaBd}), we know $\Delta \geq 2$, so (\ref{2dineq}) implies
$2d-1+\frac{2}{\Delta}\le \frac{k-i}{\Delta}\le 2 d+1.$
Subtract $1$ and multiply by $\Delta/2$ to get
$\Delta(d-1)+1\le \frac{k+i-\Delta}{2}-i\le \Delta d.$
Now 
we have
$\Delta(d-1)+1\le x-i\le \Delta d.$
Dividing by $\Delta$ yields
$\left\lceil \frac{x-i}{\Delta}\right\rceil =d$, as claimed.
%
%

\medskip

Claim 2: $x>i$. 
By (\ref{DeltaBd}), we know $\Delta<k-i$, and therefore $r>1$. This fact implies $d>0$. 
By Claim 1, it now follows that $x>i$, as desired. 

\medskip

Claim 3: $\lceil\frac{k-x}{\Delta}\rceil \geq d+1$. 
To see this, note that
 $   \left\lceil\frac{k-x}{\Delta}\right\rceil+\left\lceil\frac{x-i}{\Delta}\right\rceil\geq \frac{(k-x)+(x-i)}{\Delta}=\frac{k-i}{\Delta}>2d,$
where the final inequality is from (\ref{2dineq}). By  
Claim 1, we have $\lceil\frac{k-x}{\Delta}\rceil \geq d+1$, as desired. 

\medskip

Applying Claims 1-3 to Theorem~\ref{dist-thm}, we have dist$(A,B) = 2d+1$, proving (i).

\medskip

\noindent
(ii). Assume $r$ is even and $x\in \{ \lfloor\frac{k+i}{2}\rfloor , \lceil\frac{k+i}{2}\rceil \}$. Write $r = 2d$ so that 
\begin{equation}\label{2dineqB}
2d -1<\frac{k-i}{\Delta}\leq 2d.
\end{equation}  

Claim 1: $\left\lceil \frac{k-x}{\Delta}\right\rceil =d$.
By (\ref{DeltaBd}), we know $\Delta \geq 2$, so (\ref{2dineqB}) implies
$2d-2+\frac{2}{\Delta}\le \frac{k-i}{\Delta}\le 2 d.$
Multiply by $\Delta/2$ and simplify to obtain
$\Delta(d-1)+1\le k - \frac{k+i}{2}\le \Delta d.$
Now we have $\Delta(d-1)+1\le k-x\le \Delta d.$
Dividing by $\Delta$ yields
$\left\lceil \frac{k-x}{\Delta}\right\rceil =d$, as claimed. 

\medskip

Claim 2: $\lceil\frac{x-i}{\Delta}\rceil \geq d$. 
To see this, note that $\left\lceil\frac{k-x}{\Delta}\right\rceil+\left\lceil\frac{x-i}{\Delta}\right\rceil\geq \frac{(k-x)+(x-i)}{\Delta}=\frac{k-i}{\Delta}>2d-1,$
where the final inequality is from (\ref{2dineqB}). By Claim 1, we have $\lceil\frac{x-i}{\Delta}\rceil \geq d$, as desired.

\medskip

Claim 3:  $x>i$. 
By (\ref{DeltaBd}), we know $\Delta<k-i$, and therefore $r>1$. This fact implies $d>0$. 
By Claim 2, it now follows that $x>i$, as desired. 

\medskip

Applying Claims 1-3 to Theorem~\ref{dist-thm}, we have dist$(A,B) = 2d$, proving (ii).
\hfill
\end{proof}

\medskip

\noindent
We now can prove a lower bound for the odd girth.

\begin{lemma} \label{fancy-LB}
	With reference to Definition~\ref{global}, assume $g(X)=4$.   Then $$og(X) \geq 2\left\lceil \frac{k-i}{\Delta} \right\rceil+1.$$
\end{lemma}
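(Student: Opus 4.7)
The plan is to derive this lower bound as an immediate application of Lemma~\ref{dist-x-big-lem}(i). The key observation is that in any odd cycle, each single edge is ``shortcut'' by a path of even length going the other way around the cycle, and the length of that even path is exactly what part (i) of Lemma~\ref{dist-x-big-lem} controls via the intersection size.

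Concretely, I would start with an arbitrary odd cycle $A_1, A_2, \ldots, A_{2t+1}, A_1$ in $X$, and focus on a single edge, say $\{A_1, A_2\}$. Since $A_1$ and $A_2$ are adjacent in $X$, the intersection $x=|A_1\cap A_2|$ equals $i$. Deleting the edge $\{A_1,A_2\}$ from the cycle leaves the $A_2A_1$-path $A_2, A_3, \ldots, A_{2t+1}, A_1$ of length $2t$.

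Applying Lemma~\ref{dist-x-big-lem}(i) to this path with $d=2t$ (so $p=t$) and $x=i$ yields the bound $t \geq \lceil (k-i)/\Delta \rceil$, hence $2t+1 \geq 2\lceil (k-i)/\Delta \rceil + 1$. Since this inequality holds for every odd cycle, the same bound holds for $og(X)$, as claimed.

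There is really no obstacle here: all the heavy lifting has been packaged into Lemma~\ref{dist-x-big-lem}, so the argument reduces to picking the right edge of an arbitrary odd cycle and invoking the even-length case of that lemma. I note that the hypothesis $g(X)=4$ is not logically needed for this lower bound (the same argument would work if $g(X)$ were $3$ or $5$); it is listed only because the other girth cases were disposed of separately earlier in the section, and the matching upper bound constructed in the sequel will presumably exploit the structure of the $g(X)=4$ regime.
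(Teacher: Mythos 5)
Your proof is correct, and it takes a genuinely different and noticeably shorter route than the paper's. You delete one edge $\{A_1,A_2\}$ from an arbitrary odd cycle of length $2t+1$, note that $|A_1\cap A_2|=i$ by adjacency, and apply Lemma~\ref{dist-x-big-lem}(i) to the remaining path of length $2t$ to get $t\geq\lceil\frac{k-i}{\Delta}\rceil$ in one step. The paper instead fixes a \emph{shortest} odd cycle, takes the vertex $C$ opposite the edge $AB$, asserts $\mathrm{dist}(A,C)=\mathrm{dist}(B,C)=r$ (a minimality fact it does not spell out), and then splits on the parity of $r$, using the distance formula of Theorem~\ref{dist-thm} to bound $|A\cap C|$ and $|B\cap C|$ and finishing with an inclusion--exclusion count. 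Your argument avoids the parity case split, the appeal to Theorem~\ref{dist-thm}, and the implicit claim about distances on a minimal odd cycle; it needs only that sub-paths of a path are paths, which is all Lemma~\ref{dist-x-big-lem} requires. Your closing remark is also accurate: the hypothesis $g(X)=4$ plays no role in the lower bound (indeed, essentially your argument reappears in the paper's proof of Lemma~\ref{poljakLem} for the girth-6 case, there via part (ii) of Lemma~\ref{dist-x-big-lem}), and one can check directly that the bound is consistent with the girth-3 and girth-5 cases as well. The only thing the paper's heavier approach ``buys'' is some extra structural information about where the opposite vertex $C$ can sit relative to $A$ and $B$, but that information is not used elsewhere.
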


\begin{proof} Assume $og(X)= 2r+1$ and fix adjacent vertices $A,B$ on a $(2r+1)$-cycle. Let $C$ be the vertex opposite edge $AB$ on that cycle, so that dist$(A,C) =$ dist$(B,C)=r$.  We will prove $r \geq \frac{k-i}{\Delta}$ in two cases, depending on the parity of $r$. 

\medskip
		
Case $r=2d+1$.   We claim $|A\cap C| \leq\Delta d+i$. If not, let $x = |A\cap C|$ and suppose $x \ge \Delta d+i+1$. Then by Theorem~\ref{dist-thm}, since $r$ is odd, $d=\lceil \frac{x-i}{\Delta}\rceil\ge \lceil \frac{\Delta d + 1}{\Delta}\rceil=d+1$, a contradiction.   Similarly, 
 $|B\cap C| \leq\Delta d+i$.  Therefore, $v-2k+i 
\geq|C\setminus(A\cup B)| 
\geq k-2(\Delta d+i)$. It follows that $\Delta-i\ge k-2(\Delta d+i)$, which implies $r \geq \frac{k-i}{\Delta}$.

\medskip

Case $r=2d$. We claim $|A\cap C| \geq k-\Delta d$. If not, let $x = |A\cap C|$ and suppose $x < k-\Delta d$. Since $r$ is even and $x< k - \Delta$, Theorem~\ref{dist-thm} implies $2d = 2 \lceil \frac{k-x}{\Delta} \rceil$, a contradiction.  Similarly, 
 $|B\cap C| \geq k-\Delta d$. 
  But now $k=|C|\geq|A\cap C|+|B\cap C|-|A\cap B|\geq 2(k-\Delta d)-i$, and therefore $r \geq \frac{k-i}{\Delta}$. \hfill
\end{proof}
 
\begin{lemma} \label{fancy}
	With reference to Definition~\ref{global}, assume $g(X)=4$.   Then $$og(X) = 2\left\lceil \frac{k-i}{\Delta} \right\rceil+1.$$
\end{lemma}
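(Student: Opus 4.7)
The plan is to establish the matching upper bound $og(X) \leq 2\lceil(k-i)/\Delta\rceil+1$, since Lemma~\ref{fancy-LB} already supplies the lower bound. Set $r=\lceil(k-i)/\Delta\rceil$. Since any closed walk of odd length $\ell$ contains an odd cycle of length at most $\ell$, it suffices to produce adjacent vertices $A,B$ together with a third vertex $C$ satisfying $\text{dist}(A,C)=\text{dist}(B,C)=r$; gluing the two length-$r$ paths onto the edge $AB$ yields a closed walk of length $2r+1$, and therefore an odd cycle of length at most $2r+1$.

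By Lemma~\ref{g4-lem}, the two distances from $C$ will equal $r$ provided $|A\cap C|$ and $|B\cap C|$ each take one of the prescribed floor/ceiling values. My aim is therefore to build $C$ symmetrically with $|A\cap C|=|B\cap C|=x$, where I take $x=\lceil(k+i-\Delta)/2\rceil$ when $r$ is odd and $x=\lfloor(k+i)/2\rfloor$ when $r$ is even.

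Fix adjacent $A,B$ and parametrize $C$ by the nonnegative counts $s=|A\cap B\cap C|$, $a=|(A\setminus B)\cap C|=|(B\setminus A)\cap C|$, and $c=|C\setminus(A\cup B)|$, subject to $s+2a+c=k$ and $s+a=x$. I would try $(s,a)=(0,x)$ in the odd-$r$ case and $(s,a)=(i,x-i)$ in the even-$r$ case. The main obstacle is verifying that all seven regions of the $A$-$B$-$C$ Venn diagram are nonnegative; in particular that $c\leq|V\setminus(A\cup B)|=\Delta-i$. After substitution this reduces to a short list of scalar inequalities, each of which follows routinely from $\Delta\geq i+1$ (a consequence of $v\geq 2k$ together with the excluded triple $(2k,k,0)$) and from $\Delta<k-i$ as recorded in (\ref{DeltaBd}); the two parity cases are handled separately.

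With such a $C$ in hand, Lemma~\ref{g4-lem} yields $\text{dist}(A,C)=\text{dist}(B,C)=r$, so $og(X)\leq 2r+1$, and combining this with Lemma~\ref{fancy-LB} finishes the proof.
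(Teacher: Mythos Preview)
Your proposal is correct and follows essentially the same line as the paper's proof. Both arguments invoke Lemma~\ref{fancy-LB} for the lower bound and then, splitting on the parity of $r$, build a vertex $C$ adjacent to neither of two adjacent vertices $A,B$ with $|A\cap C|,|B\cap C|$ lying in the set specified by Lemma~\ref{g4-lem}, thereby obtaining a closed odd walk of length $2r+1$. The only cosmetic difference is that the paper allows $|A\cap C|=\lfloor x\rfloor$ and $|B\cap C|=\lceil x\rceil$ to differ by one (taking $C\supseteq\overline{A\cup B}$ in the odd case and $C\supseteq A\cap B$ in the even case), whereas you force the symmetric choice $|A\cap C|=|B\cap C|$; your feasibility check $c\le \Delta-i$ then relies on $\Delta\ge i+1$, which is indeed a consequence of the global hypotheses as you note.
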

 
\begin{proof} Let $r = \lceil \frac{k-i}{\Delta} \rceil.$ Given Lemma~\ref{fancy-LB}, we must construct a closed walk of length $2r+1$. We proceed in two cases, depending on the parity of $r$.

\medskip

 Case $r=2d+1$.  Let $x=\frac{k+i-\Delta}{2}$.    By Theorem~\ref{girth-thm}, we have $2k\leq v<3(k-i)$, so $0\leq x \leq k-i$.  Fix any adjacent vertices $A,B$.  Choose $A_0\subseteq A\setminus B$ and $B_0\subseteq B\setminus A$ with $|A_0|=\lfloor x \rfloor$ and $|B_0|= \lceil x \rceil$.  Let $C_0=\overline{A\cup B}$, and $C=A_0\cup B_0\cup C_0$.  Then $|C|=\lfloor x \rfloor + \lceil x \rceil +(v-2k+i)=k$, so $C\in V(X)$.  Also, $|A\cap C|=\lfloor x \rfloor $ and $|B\cap C|=\lceil x \rceil$. By Lemma~\ref{g4-lem}(i), dist$(A,C) =$ dist$(B,C)=r$ and $X$ has a closed walk of length $2r+1$.

\medskip

Case $r=2d$.   Let $x=\frac{k+i}{2}$,  and notice that $0\leq x-i \leq k-i$. Fix any adjacent vertices $A,B$.  Choose  $A_0\subseteq A\setminus B$ and $B_0\subseteq B\setminus A$ with $|A_0|=\lfloor x \rfloor -i$ and $|B_0|=\lceil x \rceil-i$.  Let $C_0=A\cap B$, and $C=A_0\cup B_0\cup C_0$.  Then $|C|=k$, so $C\in V(X)$.  Also, $|A\cap C|=\lfloor x \rfloor$ and $|B\cap C|= \lceil x \rceil$. By Lemma~\ref{g4-lem}(ii), dist$(A,C) =$ dist$(B,C)=r$ and $X$ has a closed walk of length $2r+1$.  \hfill
\end{proof}

\bigskip

\noindent
{\bf General case.}
 
\medskip

\begin{theorem} \label{wow-thm}
	With reference to Definition~\ref{global}, the odd girth of $X$ is given by
 \begin{equation}\label{girth-eq}
 og(X)=2\left\lceil\frac{k-i}{\Delta}\right\rceil+1.
 \end{equation}
\end{theorem}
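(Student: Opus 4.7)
The plan is to split into cases according to the girth value $g(X)$, which by Theorem~\ref{girth-thm} must equal $3$, $4$, $5$, or $6$. Since the odd girth always satisfies $og(X) \geq g(X)$, with equality whenever $g(X)$ is odd, three of the four cases are immediate once we verify that the claimed formula $2\lceil (k-i)/\Delta \rceil + 1$ produces the correct numerical value.

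First I would handle the two odd-girth cases. If $g(X)=3$, then $v \geq 3(k-i)$, which rearranges to $\Delta \geq k-i$; combined with $k>i$, this forces $\lceil (k-i)/\Delta\rceil = 1$, so the right side of (\ref{girth-eq}) is $3$, matching $og(X)=g(X)=3$. If $g(X)=5$, then $(v,k,i)=(5,2,0)$ by Theorem~\ref{girth-thm}, giving $\Delta=1$ and $k-i=2$, so the formula yields $5$, again matching $og(X)=g(X)$.

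Next I would address $g(X)=6$. By Theorem~\ref{girth-thm} this forces $(v,k,i)=(2k+1,k,0)$ with $k>2$, so $i=0$ and $\Delta=1$. The formula then reads $2\lceil k/1 \rceil + 1 = 2k+1$, which agrees with Lemma~\ref{poljakLem}. Finally, when $g(X)=4$, the conclusion is exactly the content of Lemma~\ref{fancy}, so no further work is required.

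Since Lemma~\ref{fancy} already contains the real substance (Claims 1--3 inside Lemma~\ref{g4-lem}, together with the lower bound in Lemma~\ref{fancy-LB} and the explicit closed-walk constructions), no step here should pose any genuine obstacle: the proof is essentially an assembly of the earlier results with a short arithmetic check that the closed-form expression specializes correctly on the boundary cases where the girth is not $4$.
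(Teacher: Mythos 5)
Your proposal is correct and follows essentially the same route as the paper's own proof: a case split on the girth value from Theorem~\ref{girth-thm}, with the odd-girth cases handled by the observation that $og(X)=g(X)$ when the girth is odd, the girth-$6$ case by Lemma~\ref{poljakLem}, and the girth-$4$ case by Lemma~\ref{fancy}, plus the same arithmetic checks that the formula specializes correctly.
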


\begin{proof}
 By Theorem~\ref{girth-thm}, the girth of $X$ satisfies $3 \leq g(X) \leq 6$.  
 
If $g(X)=3$, then $og(X)=3$. In this case, $v \geq 3k-3i$ by Theorem~\ref{girth-thm}, so $k-i \leq v-2k+2i$ and equality holds in (\ref{girth-eq}).

If $g(X)=4$, then equality holds in (\ref{girth-eq}) by Lemma~\ref{fancy}.   

If $g(X)=5$, then $og(X)=5$. In this case, $(v,k,i)=(5,2,0)$ by Theorem~\ref{girth-thm}, so equality holds in (\ref{girth-eq}).

Finally, if $g(X)=6$, then $(v,k,i)=(2k+1,k,0)$ for some integer $k>2$ by Theorem~\ref{girth-thm}. But then $og(X)=2k+1$ by Lemma~\ref{poljakLem} and equality holds in (\ref{girth-eq}). \hfill
\end{proof}



\newpage







\nocite{agong2018girth}

\bibliographystyle{plain}
\bibliography{main}
\end{document}